\newtheorem{lemma}{Lemma}[section]
\newtheorem{proposition}[lemma]{Proposition}
\newtheorem{theorem}[lemma]{Theorem}
\newtheorem{example}[lemma]{Example}
\newenvironment{remark}{\textbf{Remark}}{}
\numberwithin{equation}{section}
\newcommand{\ud}{\mathrm{d}}
\newcommand{\RR}{\mathbb{R}}
\newcommand{\f}{\frac}
\newcommand{\xx}{|x|^2}
\newcommand{\yy}{|y|^2}
\newcommand{\xy}{\langle x,y\rangle}
\newcommand{\pppp}[4]%
  {\frac{\partial^3{#1}}{\partial{#2}\partial{#3}\partial{#4}}}
\newcommand{\yI}{y^i}
\newcommand{\yk}{y_k}
\renewcommand{\a}{\alpha}
\renewcommand{\b}{\beta}
\newcommand{\ab}{(\alpha,\beta)}
\newcommand{\ba}{\bar\alpha}
\newcommand{\bb}{\bar\beta}
\newcommand{\aij}{a_{ij}}
\newcommand{\bij}{b_{i|j}}
\newcommand{\baij}{\bar a_{ij}}
\newcommand{\bbij}{\bar b_{i|j}}
\newcommand{\bi}{b_i}
\newcommand{\bj}{b_j}
\newcommand{\bk}{b_k}
\newcommand{\bI}{b^i}
\newcommand{\bK}{b^k}
\newcommand{\rij}{r_{ij}}
\newcommand{\roo}{r_{00}}
\newcommand{\rIiho}{r^i{}_{i|0}}
\newcommand{\rIohi}{r^i{}_{0|i}}
\newcommand{\rIihb}{r^i{}_{i|b}}
\newcommand{\rooho}{r_{00|0}}
\newcommand{\roohk}{r_{00|k}}
\newcommand{\rkoho}{r_{k0|0}}
\newcommand{\rkohb}{r_{k0|b}}
\newcommand{\rIhk}{r^i{}_{|k}}
\newcommand{\rIho}{r^i{}_{|0}}
\newcommand{\rIhi}{r^i{}_{|i}}
\newcommand{\roho}{r_{0|0}}
\newcommand{\rohk}{r_{0|k}}
\newcommand{\rkho}{r_{k|0}}
\newcommand{\rohb}{r_{0|b}}
\newcommand{\rhk}{r_{|k}}
\newcommand{\rIkho}{r^i{}_{k|0}}
\newcommand{\rIkhb}{r^i{}_{k|b}}
\newcommand{\rIohk}{r^i{}_{0|k}}
\newcommand{\rkohI}{r_{k0|}{}^{i}}
\newcommand{\rkhI}{r_{k|}{}^{i}}
\newcommand{\rIoho}{r^i{}_{0|0}}
\newcommand{\roohI}{r_{00|}{}^{i}}
\newcommand{\rIohb}{r^i{}_{0|b}}
\newcommand{\rohI}{r_{0|}{}^{i}}
\newcommand{\sij}{s_{ij}}
\newcommand{\sIk}{s^i{}_k}
\newcommand{\sIo}{s^i{}_0}
\newcommand{\sko}{s_{k0}}
\newcommand{\si}{s_i}
\newcommand{\sj}{s_j}
\newcommand{\sk}{s_k}
\newcommand{\sI}{s^i}
\newcommand{\so}{s_0}
\newcommand{\sIohi}{s^i{}_{0|i}}
\newcommand{\sIohk}{s^i{}_{0|k}}
\newcommand{\sIohb}{s^i{}_{0|b}}
\newcommand{\sIkho}{s^i{}_{k|0}}
\newcommand{\sIkhi}{s^i{}_{k|i}}
\newcommand{\sIoho}{s^i{}_{0|0}}
\newcommand{\sIhi}{s^i{}_{|i}}
\newcommand{\sIhk}{s^i{}_{|k}}
\newcommand{\skhI}{s_{k|}{}^{i}}
\newcommand{\sIho}{s^i{}_{|0}}
\newcommand{\soho}{s_{0|0}}
\newcommand{\sohb}{s_{0|b}}
\newcommand{\sohk}{s_{0|k}}
\newcommand{\skho}{s_{k|0}}
\newcommand{\co}{c_0}
\newcommand{\cb}{c_b}
\newcommand{\ck}{c_k}
\newcommand{\poo}{p_{00}}
\newcommand{\pk}{p_k}
\newcommand{\po}{p_0}
\newcommand{\pIi}{p^i{}_i}
\newcommand{\pIk}{p^i{}_k}
\newcommand{\pko}{p_{k0}}
\newcommand{\qk}{q_k}
\newcommand{\qIo}{q^i{}_0}
\newcommand{\qoo}{q_{00}}
\newcommand{\qoI}{q_{0}{}^i}
\newcommand{\qko}{q_{k0}}
\newcommand{\qok}{q_{0k}}
\newcommand{\qqk}{q^*{}_k}
\newcommand{\qqo}{q^*{}_0}
\newcommand{\qIi}{q^i{}_{i}}
\newcommand{\too}{t_{00}}
\newcommand{\tIo}{t^i{}_{0}}
\newcommand{\tko}{t_{k0}}
\newcommand{\tk}{t_k}
\newcommand{\tI}{t^i}
\renewcommand{\to}{t_0}
\newcommand{\tIk}{t^i{}_{k}}
\newcommand{\tIi}{t^i{}_{i}}
\newcommand{\dIk}{\delta^i{}_k}
\newcommand{\RIkF}{{}^F{R}^i{}_k}
\newcommand{\RIk}{{R}^i{}_k}
\newcommand{\RIb}{{R}^i{}_b}
\newcommand{\Rbb}{R_{bb}}
\newcommand{\RbIkb}{R_{b}{}^i{}_{kb}}
\newcommand{\RoIkb}{R_{0}{}^i{}_{kb}}
\newcommand{\RbIko}{R_{b}{}^i{}_{k0}}
\newcommand{\Rbokb}{R_{b0kb}}
\newcommand{\RicooF}{{}^F{Ric_{00}}}
\newcommand{\Ricoo}{{Ric_{00}}}
\newcommand{\Rico}{{Ric_{0}}}
\newcommand{\Ric}{{Ric}}
\newcommand{\Rat}{\mathfrak{Rat}}
\newcommand{\Irrat}{\mathfrak{Irrat}}
\newcommand{\rat}{{\mathfrak{rat}^i{}_k}}
\newcommand{\irrat}{{\mathfrak{irrat}^i{}_k}}
\newcommand{\Rmnum}[1]{\expandafter\@slowromancap\romannumeral #1@}
\begin{document}
\title{Some remarks on Einstein-Randers metrics}
\footnotetext{\emph{Keywords}:Randers metric, Einstein metric, flag curvature, navigation problem.
\\
\emph{Mathematics Subject Classification}: 53B40, 53C60.}

\author{Xiaoyun Tang and Changtao Yu}

\date{2017.06.01}
\maketitle

\begin{abstract}
In this essay, we study the sufficient and necessary conditions for a Randers metrc to be of constant Ricci curvature, without the restriction of strong convexity (regularity). A classification result for the case $\|\b\|_\a>1$ is provided, which is similar to the famous Bao-Robles-Shen's result for strongly convex Randers metrics ($\|\b\|_\a<1$). Based on some famous vacuum Einstein metrics in General Relativity, many non-regular Einstein-Randers metrics are constructed. Besides, we find that the case $\|\b\|_\a\equiv1$ is very distinctive. These metrics will be called singular Randers metrics or parabolic Finsler metrics since their indicatrixs are parabolic hypersurfaces. A preliminary discussion for such metrics is provided.
\end{abstract}

\section{Introduction}
In the past fifteen years, one of the most inspiring progress in Finsler geometry is the classification of strongly convex Randers metrics with constant flag or Ricci curvature\cite{db-robl-szm-zerm,db-robl-onri}. A wonderful review article \cite{db-rsf} is highly recommended. Randers metrics are the simplest non-Riemann Finsler metrics expressed as $F=\a+\b$, where $\a$ is a Riemann metric and $\b$ is a $1$-form~(Randers's original idea employs Lorentz rather than Riemann metrics because he was operating within the context of General Relativity\cite{Ran,db-rsf}).

In short, the earliest characterisation of Randers space forms was provided by Yasuda-Shimada in 1977. However, Shen constructed many countable examples in 2001, which indicated that Yasuda-Shimada's result was incorrect. Meanwhile, Bao and Robles provided a modified characterisation, which leaded to the final classification finished by Bao, Robles and Shen. Their nice result says, a strongly convex Randers metric $F=\a+\b$ has constant flag curvature if and only if after some suitable deformations on $\a$ and $\b$, the resulting Riemann metric $\ba$ has constant sectional curvature, and $1$-form $\bb$ is homothetic with respect to $\ba$. Soon afterwards, Bao and Robles made clear the structure of Einstein-Randers metrics\cite{db-robl-onri}. See Theorem \ref{uabubdagnianengga} for details. The key technique relates to the classical Zermelo's navigation problem on Riemann spaces, which is also the key for Shen to construct his famous ``fish pond"\cite{shen-fmwk}.

It is known that a Randers metric $F=\a+\b$ is strongly convex (regular) if and only if $b:=\|\b\|_\a<1$. Recently, we find that the cases when $b\equiv1$ or $b>1$, especially the later, are also interesting. Although both of them as metrics have some singularity.

Specifically, the curvature structure of Randers metrics with $b>1$ is much similar to that of strongly convex Randers metrics. The final classification (Theorem \ref{uabubdagnianengga}) shows that a Randers metric $F=\a+\b$ with $b>1$ has constant flag curvature if and only if after some suitable deformations on $\a$ and $\b$, the resulting metric $\ba$ is a Lorentz metric with constant sectional curvature, and $1$-form $\bb$ is homothetic with respect to $\ba$. That is to say, such metrics have close relationship with Lorentz other than Riemann metrics. In this sense, they match Randers's original idea better. Perhaps they will be of some potential applications in physics in the future.

The case when $b\equiv1$ is exceptional and different form the case when $b\neq1$ essentially. Its curvature structure is complicated and hard to forecast. We will call such a metric as a \emph{singular Randers metric}. However, its singularity is negligible. It will also be called a \emph{parabolic Finsler metric}, since its indicatrix is parabolic hypersurface. See Section 4 for related discussions. The argument on singular Randers metrics in this essay is limited. In fact, the second author has a long-term project to make clear the curvature structure of thess metrics. We have obtain many encouraging results, although there are still many formidable obstacles.

In summary, this essay includes four essentials listed below.
\begin{itemize}
\item A series of priori formulae on $\b$ are summarized in Section 3. Parts of them had been proved and used effectively earlier, see \cite{db-robl-onri} or \cite{cxy-szm-fgaa}. These priori formulae will be useful for the further study, especially for Riemann metrics and general $\ab$-metrics\cite{yct-zhm-onan}.
\item A brief proof of characterization for Randers metrics of constant Ricci or flag curvature, without insisting on strongly convexity, is provided in Section 5 and 6 respectively.
\item A complete local classification for Randers metrics of constant Ricci or flag curvature with $b>1$ is obtained in Section 7. In Section 8 and Section 9 some typical examples are listed. They are constructed by using some well-known vacuum solutions of Einstein's field equations, such as Schwarzschild metric, Kerr metric, Kasner metric, etc.
\item Some discussions on particularity of singular Randers metrics are given in the last Section.
\end{itemize}

\section{Preliminaries}
A Finsler metric on a $n$-dimensional manifold $M$ is a smooth function $F(x,y)$ on entire slit tangent bundel $TM\backslash\{0\}$ with positive homogeneity of degree $1$, where $x$ denotes position of $M$ and $y$ denotes direction in $T_xM$. Moreover, nonnegativity of a metric requires $F$ to be positive on $TM\backslash\{0\}$. In order to calculate other geometric quantities such as Riemann curvature, $F$ is usually asked to be strongly convex. namely the $y$-Hessian $g_{ij}:=(\frac{1}{2}F^2)_{y^iy^j}$ is positive definite. Nonnegativity together with strongly convexity indicate that the indicatrix of $F$ at any point $x$ is a strongly convex hypersurface in $T_xM$.

However, the Riemann curvature tensor is well-defined so long as $g_{ij}$ is non-degenerate. Meanwhile, the restriction of nonnegativity can be relaxed too. The famous Kropina metric $F=\frac{\a^2}{\b}$ in Finsler geometry is a typical example.

Given a Finsler metric $F$,
\begin{eqnarray*}
G^{i}:=\frac{1}{4}g^{il}\left\{[F^{2}]_{x^{k}y^{l}}y^{k}-[F^{2}]_{x^{l}}\right\}
\end{eqnarray*}
are its spray coefficients, in which $(g^{ij})=(g_{ij})^{-1}$. The Riemann tensor ${}^FR_{y}=\RIkF\frac{\partial}{\partial x^{i}}\otimes
dx^{k}$ of $F$ is determined by Berwald's formula as follows,
\begin{eqnarray*}
\RIkF:=2G^{i}_{x^{k}}-G^{i}_{x^{l}y^{k}}y^{l}+2G^{l}G^{i}_{y^{k}y^{l}}-G^{i}_{y^{l}}G^{l}_{y^{k}}.
\end{eqnarray*}
$F$ is said to be of constant flag curvature $K$ if and only if $\RIkF=K(F^2\dIk-Fy^iF_{y^k})$. The Ricci curvature of $F$ is the trace of $(\RIkF)$. By the Ricci scalar ${}^FR^m{}_m$ one can define the Ricci tensor by
\begin{eqnarray*}
{}^F{Ric}_{ij}:=\left(\f{1}{2}{}^FR^m{}_m\right)_{y^iy^j}.
\end{eqnarray*}
This definition is due to Akbar-Zadeh. Hence, ${}^FR^m{}_m={}^F{Ric}_{ij}y^iy^j$ and we will denote the Ricci curvature by $\RicooF$ in this paper. $F$ is said to be of constant Ricci curvature $K$ if and only if $\RicooF=(n-1)KF^2$. In this case, $F$ is called an Einstein-Finsler metric with Ricci constant $K$.

We need some abbreviations. Let $\a=\sqrt{a_{ij}y^iy^j}$ be a Riemann metric, $\b=b_iy^i$ be a $1$-form. Denote
\begin{eqnarray*}
\rij:=\f{1}{2}(\bij+b_{j|i}),\quad\sij:=\f{1}{2}(\bij-b_{j|i})
\end{eqnarray*}
be the symmetrization and antisymmetrization of the covariant derivative $\bij$ respectively, then
\begin{eqnarray*}
&r_{i0}:=r_{ij}y^j,\quad r^i{}_0:=a^{im}r_{m0},\quad r_{00}:=r_{i0}y^i,\quad r_i:=b^mr_{mi},\quad r^i:=a^{im}r_{m},\quad r_0:=r_iy^i,\quad r:=r_ib^i,\\
&s_{i0}:=s_{ij}y^j,\quad s^i{}_0:=a^{im}s_{m0},\quad s_i:=b^ms_{mi},\quad s^i:=a^{im}s_{m},\quad s_0:=s_iy^i.
\end{eqnarray*}
Roughly speaking, indices are raised or lowered by $a^{ij}$ or $\aij$, vanished by contracted with $b^i$~(or~$\bi$) and changed to be '${}_0$' by contracted with $y^i$ (or $y_i:=a_{ij}y^j$). At the same time, we need other three tensors
\begin{eqnarray*}
p_{ij}:=r_{im}r^m{}_j,\qquad q_{ij}:=r_{im}s^m{}_j,\qquad t_{ij}:=s_{im}s^m{}_j
\end{eqnarray*}
and the related tensors determined by the above rules. Notice that both $p_{ij}$ and $t_{ij}$ are symmetric, but $q_{ij}$ is neither symmetric nor antisymmetric in general. So $b^jq_{ji}$ and $b^jq_{ij}$, denoted by $q_i$ and $q^\star_i$ respectively, are different. But $b^iq_i$, denoted by $q$, is equal to $b^iq^\star_i$. Finally, in order to avoid ambiguity, sometimes we will use index $b$, which means contracting the corresponding index with $b^i$ or $b_i$. For example, $\rohb=r_{0|k}b^k$.

\section{Some priori formulae on $\b$}
In this section, we summarize some criteria which a $1$-form $\b$ must satisfy on a Riemann space. They are listed in (\ref{3ingaindngiad})-(\ref{relation6}). The key formula (\ref{relation1}) play a crucial role in the study of strongly convex Randers metrics \cite{db-robl-onri}.

A remarkable feature of these formulae is that all of them are priori. That is to say, they hold for any Riemann metric $\a$ and any $1$-form $\b$ without any restriction. These formulae show the inner relationship between the covariant derivative of $\sij$ and the Riemann tensor. Hence, one can use them, at the very beginning of the whole discussion, to replace all the terms related to the covariant derivative of $\sij$ with the Riemann curvature. And likewise, one can use them as the final check, just as we do in Section 5. In Section 6 our approach is a little different. we use them in the middle of the discussions. Anyway, these priori formulae will be very useful for the further research, especially for the related topics on Riemann metrics and general $\ab$-metrics\cite{yct-zhm-onan}.
\begin{proposition}
\begin{eqnarray}\label{3ingaindngiad}
r_{i|j}+s_{i|j}=r_{j|i}+s_{j|i},
\end{eqnarray}
as a corollary,
\begin{eqnarray}
\sohb=-p_{0}-q_{0}+q^{*}_{0}-t_{0}-\rohb+r_{|0}.\label{relation5}
\end{eqnarray}
\end{proposition}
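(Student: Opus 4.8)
The plan is to establish the commutation identity (\ref{3ingaindngiad}) first, and then obtain (\ref{relation5}) from it by a single contraction followed by bookkeeping of the resulting derivative terms.

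For (\ref{3ingaindngiad}) the idea I would use is that $\ri+\si$ is a gradient. Since $\bij=\rij+\sij$, contracting with $\bI$ gives $\ri+\si=b^mb_{m|i}=\frac{1}{2}(b^mb_m)_{|i}$, which is half the covariant derivative of the scalar $b^2:=b^mb_m$ (the metric $\a$ being parallel, raising and lowering commute with $|$). The Hessian of a scalar with respect to the Levi-Civita connection of $\a$ is symmetric, so $(b^2)_{|i|j}=(b^2)_{|j|i}$; differentiating $\ri+\si=\frac{1}{2}(b^2)_{|i}$ once more yields $r_{i|j}+s_{i|j}=r_{j|i}+s_{j|i}$, which is (\ref{3ingaindngiad}).

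To deduce (\ref{relation5}) I would contract (\ref{3ingaindngiad}) with $\bI y^j$. Writing $r_{i|0}:=r_{i|j}y^j$ and recalling $\rohb=r_{i|k}y^ib^k=r_{j|i}y^jb^i$ (and likewise for $s$), the contraction reads
\[
\bI r_{i|0}+\bI s_{i|0}=\rohb+\sohb,
\]
and it remains to evaluate the two derivatives on the left. For the antisymmetric part I would differentiate $\si=b^ms_{mi}$ directly: since $b^m{}_{|k}=r^m{}_k+s^m{}_k$, one gets $\bI s_{i|k}=(r^m{}_k+s^m{}_k)(\bI s_{mi})+b^ib^ms_{mi|k}$, where the last term vanishes because $s_{mi|k}$ is antisymmetric in $m,i$ while $b^ib^m$ is symmetric. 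Using $\bI s_{mi}=-s_m$ together with the algebraic identities $s_mr^m{}_k=-q^*{}_k$ and $s_ms^m{}_k=t_k$, this collapses to $\bI s_{i|k}=q^*{}_k-t_k$, hence $\bI s_{i|0}=q^*{}_0-t_0$. For the symmetric part I would instead use the scalar $r=\ri\bI$: differentiating gives $r_{|k}=r_{i|k}\bI+\ri(r^i{}_k+s^i{}_k)$, and with $\ri r^i{}_k=p_k$, $\ri s^i{}_k=q_k$ this becomes $r_{|0}=\bI r_{i|0}+p_0+q_0$, i.e. $\bI r_{i|0}=r_{|0}-p_0-q_0$.

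Substituting both expressions into the contracted identity and solving for $\sohb$ produces exactly (\ref{relation5}). The substantive (rather than conceptual) difficulty is the sign and index bookkeeping in the auxiliary identities, in particular keeping $q_0$ and $q^*_0$ apart---they differ precisely by the failure of $q_{ij}$ to be symmetric---and correctly tracking the minus signs generated by the antisymmetry of $\sij$ each time an index is contracted with $\bI$.
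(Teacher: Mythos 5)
Your proof is correct and follows the same route as the paper: identity (\ref{3ingaindngiad}) from the symmetry $(b^2)_{|i|j}=(b^2)_{|j|i}$ of the covariant Hessian of $b^2$, and (\ref{relation5}) by contracting with $y^ib^j$ (your $b^iy^j$ contraction is the same equation, since the identity is symmetric in $i,j$). The auxiliary identities $b^ir_{i|0}=r_{|0}-p_0-q_0$ and $b^is_{i|0}=q^*{}_0-t_0$, which the paper leaves implicit, are exactly the bookkeeping needed and your signs check out.
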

\begin{proof}
(\ref{3ingaindngiad}) is a direct result of the fact $(b^2)_{|i|j}=(b^2)_{|j|i}$. Contracting (\ref{3ingaindngiad}) with $y^ib^j$ yields (\ref{relation5}).
\end{proof}

\begin{proposition}
\begin{eqnarray}\label{relation1}
s_{ij|k}=-b^m R_{kmij}+r_{ik|j}-r_{jk|i},
\end{eqnarray}
where $R_{kmij}$ is the fourth-order Riemann tensor determined by $R_{kmij}=\frac{1}{3}\left(\frac{\partial^2R_{mi}}{\partial y^j\partial y^k}-\frac{\partial^2R_{mj}}{\partial y^i\partial y^k}\right)$. As corollaries,
\begin{eqnarray}
\sIkho&=&\RoIkb-\RbIko-\rkohI+\rIohk,\label{relation7}\\
\sIohk&=&\RoIkb+\rIkho-\rkohI,\label{relation8}\\
\sIoho&=&-\RIb+\rIoho-\roohI,\label{relation9}\\
\sIohb&=&-\qIo+\qoI+\rIho-\rohI,\label{relation14}\\
\sIhk&=&-\RbIkb-\pIk-\tIk-\rIkhb+\rkhI,\label{relations10}\\
\skho&=&-\Rbokb-\pko-\tko-\rkohb+\rohk,\label{relation12}\\
\sohk&=&-\Rbokb-\pko-\tko-\rkohb+\rkho,\label{relation13}\\
\soho&=&-\Rbb-p_{00}-t_{00}-r_{00|b}+r_{0|0},\label{relation4}\\
\sIohi&=&\Rico+\rIiho-\rIohi,\label{relation2}\\
\sIhi&=&-\Ric-\pIi-\tIi-\rIihb+\rIhi.\label{relation6}
\end{eqnarray}
\end{proposition}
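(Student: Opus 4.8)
The plan is to obtain the master identity (\ref{relation1}) from the Ricci commutation formula for the covariant derivatives of $\b$, and then to read off every corollary (\ref{relation7})--(\ref{relation6}) by contracting it with suitable copies of $y^i$ and $b^i$. First I would write the commutation rule for the covector $b_i$ on $\a$, namely $b_{i|j|k}-b_{i|k|j}=-b_mR^m{}_{ijk}$, with $R^m{}_{ijk}$ the Riemann tensor of $\a$ and the sign fixed to agree with the convention built into the definition of $R_{kmij}$. Since $\bij=\rij+\sij$, each second covariant derivative splits as $b_{i|j|k}=r_{ij|k}+s_{ij|k}$, so the six reorderings of $b_{i|j|k}$ in the indices $i,j,k$ are constrained by the three symmetry relations $b_{i|j|k}+b_{j|i|k}=2r_{ij|k}$ (and its two cyclic analogues) together with the three commutation identities above.

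The core step is to solve this small linear system for the antisymmetric combination $s_{ij|k}=\tfrac12(b_{i|j|k}-b_{j|i|k})$. Eliminating the remaining second derivatives leaves $s_{ij|k}=r_{ik|j}-r_{jk|i}+\tfrac12\,b^m(-R_{mijk}+R_{mjik}+R_{mkij})$, in which the three curvature terms are exactly those produced by the three commutators. I would then collapse this curvature combination using the first Bianchi identity together with the antisymmetry of $R_{mijk}$ in its last two indices, reducing $\tfrac12(-R_{mijk}+R_{mjik}+R_{mkij})$ to a single term. A short computation showing that the $y$-Hessian recipe $R_{kmij}=\tfrac13(\partial^2_{y^jy^k}R_{mi}-\partial^2_{y^iy^k}R_{mj})$ reproduces exactly this term then yields the asserted $-b^mR_{kmij}$.

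The corollaries are then purely mechanical. Each is the contraction of (\ref{relation1}) with some of $y^i,y^j,y^k$ (turning a free index into $0$) and some of $b^i,b^j,b^k$ (turning it into the index $b$), followed by raising indices with $a^{ij}$. The only genuine manipulation is that whenever a $b$ is contracted against a covariantly differentiated slot it must be pushed through the derivative by $b^j{}_{|k}=r^j{}_k+s^j{}_k$; the resulting products $r\cdot r$, $r\cdot s$, $s\cdot s$ are precisely $p_{ij}$, $q_{ij}$, $q^*_{ij}$, $t_{ij}$, while the $b$-contracted curvature becomes the flag-type terms $\RIb$, $\RbIkb$, $\Rbb$ and, after tracing, $\Rico$ and $\Ric$. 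For instance (\ref{relation9}) comes from contracting $j,k$ with $y$ and raising $i$, whereas (\ref{relation6}) comes from contracting $j$ with $b^j$ and tracing $i$ against $k$.

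The main obstacle I anticipate is not conceptual but a matter of bookkeeping with signs and index order: one must pin down the sign in the Ricci identity and verify that the $y$-Hessian formula for $R_{kmij}$ genuinely matches the curvature combination emerging from the commutators, including the order of the first two indices $k,m$. In the corollaries the care lies in tracking the extra $r^j{}_k+s^j{}_k$ factors generated each time $b^i$ crosses a covariant derivative; once the convention for $R_{kmij}$ has been fixed by the master identity, no further idea is required.
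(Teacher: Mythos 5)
Your proposal is correct and follows essentially the same route as the paper: the master identity comes from combining the Ricci commutation identities for $b_{i|j|k}$ with the symmetrization relations from $\bij=\rij+\sij$, collapsing the three curvature terms via the first Bianchi identity and the index symmetries of the Riemann tensor, and the corollaries follow by contracting with $y$'s and $b$'s while pushing $b$ through covariant derivatives using $b_{i|j}=r_{ij}+s_{ij}$ (which is exactly what generates the paper's list of ``basic facts'' involving $p$, $q$, $q^*$, $t$). The only cosmetic difference is that you solve a small linear system for $s_{ij|k}$, whereas the paper arranges the five identities so that simply adding them yields the result.
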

\begin{proof}
By Ricci identity, we have
\begin{eqnarray}
b_{i|j|k}-b_{i|k|j}&=&b^{m}R_{imjk},\nonumber\\
-b_{k|i|j}+b_{k|j|i}&=& -b^{m}R_{kmij},\nonumber\\
b_{j|k|i}-b_{j|i|k}&=&b^{m}R_{jmki}.\label{Ricciidentity}
\end{eqnarray}
On  the other hand, by the definition of $\rij$ we have
\begin{eqnarray}
b_{i|k|j}+b_{k|i|j}&=&2r_{ik|j},\nonumber\\
-b_{k|j|i}-b_{j|k|i}&=&-2r_{kj|i}.\label{rikj}
\end{eqnarray}
Adding all the equalities in (\ref{Ricciidentity}) and (\ref{rikj}) yields (\ref{relation1}). The argument above was given in \cite{db-robl-orso}.

(\ref{relation7})-(\ref{relation6}) can be obtain by contracting (\ref{relation1}) with related tensors and using some basic facts listed bolow,
\begin{eqnarray*}
\bi\rIoho&=&\roho-\poo-\qoo,\\
\bi\rIohb&=&\rohb-\po-\qqo,\\
\bi\rIhk&=&\rhk-\pk-\qk,\\
\bi\sIohk&=&\sohk-\qko+\tko,\\
\bi\sIkho&=&\skho-\qok+\tko,\\
\bK\sIkhi&=&-\sIhi-\qIi-\tIi,\\
\bi\sIhk&=&\qqk-\tk.
\end{eqnarray*}
Notice that in (\ref{relation6}) we use a fact $q^{i}{}_{i}=0$. It holds because
\begin{eqnarray*}
q^{i}{}_{i}=r_{ik}a^{lk}s_{lk}a^{ij}=-r_{ki}a^{ij}s_{jl}a^{lk}=-q_{kl}a^{lk}=-q^{l}{}_{l}.
\end{eqnarray*}
\end{proof}

\section{Randers norms and navigation deformations}
A Randers norm $F=\a+\b$ on a $n$-dimensional vector space $V^n$ can always be normalized as
\begin{eqnarray*}
F(y^1,y^2,\cdots,y^n)=\sqrt{(y^1)^2+(y^2)^2+\cdots+(y^n)^2}+by^n,
\end{eqnarray*}
where $b$ is a constant signifying the length of $\b=by^n$ with respect to the standard Euclid norm $\a=\sqrt{(y^1)^2+(y^2)^2+\cdots+(y^n)^2}$. Hence, the indicatrix of $F$, namely the set of unit vectors
\begin{eqnarray*}
\mathcal{I}^+_F:=\{y\in V^n~|~F(y)=1\},
\end{eqnarray*}
is a hypersurface in $V^n$ described by the equation
\begin{eqnarray}\label{indicatrix}
(y^1)^2+(y^2)^2+\cdots+(y^{n-1})^2+(1-b^2)(y^n)^2+2by^n=1,\quad 1-by^n\geq0.
\end{eqnarray}

It is known that $F$ is strongly convex if and only if $b<1$. In this case, $\mathcal{I}^+_F$ is elliptic. When $b=1$, (\ref{indicatrix}) represents a parabolic hypersurface, and there is only one direction $y$ satisfying $F(y)=0$. For the third case $b>1$,  either $\mathcal{I}^+_F$ or $\mathcal{I}^-_F:=\{y\in V^n~|~F(y)=-1\}$ is one sheet of two-sheet hyperboloid. Moreover, the set of null vectors, namely the vectors satisfying $F(y)=0$, is a semi-cone.

\begin{figure}[h]
\centering
\includegraphics[scale=0.7]{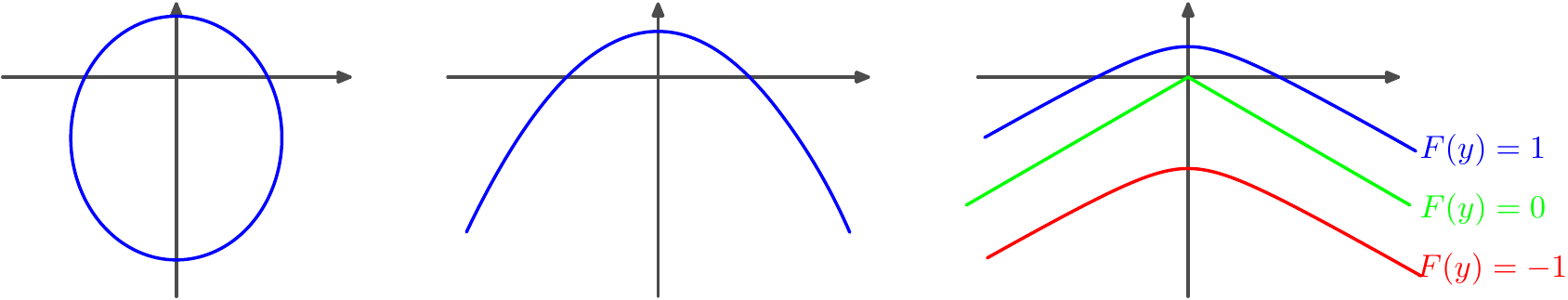}
\caption{Indicatrix of Randers norm when $b<1$, $b=1$ or $b>1$}
\end{figure}

In 1931, Zermelo considered the paths of shortest travel time problem on Euclid plane $\RR^2$, under the influence of a wind represented by a vector field on $\RR^2$\cite{zermelo}. It 2001, it is Shen who firstly realized that strongly convex Randers metrics are just the solutions of Zermelo's navigation problem on arbitrary Riemannian spaces\cite{db-robl-szm-zerm}. Navigation problem is also the key to reveal the properties of geodesics for Randers metrics or Kropina metrics\cite{Robles,Sabau-Shi}. Here we just state some basic facts about Shen's construction. See \cite{db-robl-szm-zerm} for details. See also \cite{huang} for more discussions on general navigation problem on an arbitrary Finsler space.

Consider a Euclid vector space $(V^n,\ba)$. $W$ is a vector with a restriction $\|W\|_{\ba}<1$. Then the following equation
\begin{eqnarray}\label{diaoiningannng}
\left\langle \f{y}{F(y)}-W,\f{y}{F(y)}-W\right\rangle_{\ba}=1,
\end{eqnarray}
determines a Minkowski norm on $V^n$. Notice that (\ref{diaoiningannng}) is a quadratic equation of $F$
\begin{eqnarray*}
(1-\bar b^2)F^2+2\bb F-\ba^2=0,
\end{eqnarray*}
where $\bb:=\langle y,W\rangle_{\ba}$ is the dual linear functional of $W$, $\bar b:=\|\bb\|_{\ba}=\|W\|_{\ba}$. As a result,
\begin{eqnarray*}
F(y)=\f{\sqrt{(1-\bar b^2)\ba^2+\bb^2}}{1-\bar b^2}-\f{\bb}{1-\bar b^2},
\end{eqnarray*}
which can be rewrote as $F=\a+\b$ where $\a=\frac{\sqrt{(1-\bar b^2)\ba^2+\bb^2}}{1-\bar b^2}$ and $\b=-\frac{\bb}{1-\bar b^2}$. Notice that $b=\bar b<1$. so $F$ is a strongly convex Randers norm. Conversely, $\ba$ and $\bb$ can be determined by $\a$ and $\b$ as below,
\begin{eqnarray}\label{donaienggainiga}
\ba=\sqrt{(1-b^2)(\a^2-\b^2)},\qquad \bb=-(1-b^2)\bb.
\end{eqnarray}
We will call (\ref{donaienggainiga}) the \emph{navigation deformations} for strongly convex Randers norm.

In fact, for the case $b>1$, $F$ can also be regarded as a solution of navigation problem. But we need to replace the underlying Euclid space as a Lorentz space. Let $\ba$ be a Lorentz norm on $V^n$ with Lorentz signature $(-,\cdots,-,+)$ and $W$ be a timelike vector with a restriction $\|W\|_{\ba}>1$. Then Equation (\ref{diaoiningannng}) determines a Minkowski norm
\begin{eqnarray*}
F(y)=\f{\sqrt{(1-\bar b^2)\ba^2+\bb^2}}{\bar b^2-1}+\f{\bb}{\bar b^2-1},
\end{eqnarray*}
in which $\bb:=\langle W,y\rangle_{\ba}$, $\bar b:=\|\bb\|_{\ba}=\|W\|_{\ba}>1$. It can be rewrote as $F=\a+\b$ too, where $\a=\frac{\sqrt{(1-\bar b^2)\ba^2+\bb^2}}{\bar b^2-1}$ and $\b=\frac{\bb}{\bar b^2-1}$. It is easy to verify that  $\a$ is positive definite on $V^n$, and $b:=\|\b\|_{\a}=\|\bb\|_{\ba}>1$. Conversely, $\ba$ and $\bb$ can be determined by $\a$ and $\b$ also as (\ref{donaienggainiga}),

Roughly speaking, shifting a Euclid norm based on
\begin{eqnarray*}
\mathcal{I}^+_F=\mathcal{I}^+_{\ba}+\{W\}
\end{eqnarray*}
yields a strongly convex Randers norm, and shifting a Lorentz norm based on
\begin{eqnarray*}
\mathcal{I}^+_F\subset\mathcal{I}^+_{\ba}+\{W\}
\end{eqnarray*}
yields a Rander norm $F=\a+\b$ with $b>1$. Notice that for the later case, only one sheet of the two-sheet hyperboloid after shifting is reserved.

\begin{figure}[h]
\centering
\includegraphics[scale=0.85]{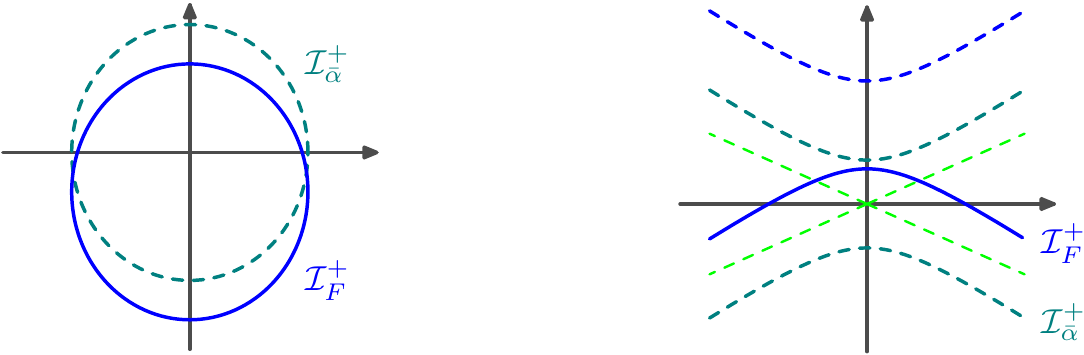}
\caption{Randers norm when $b<1$ or $b>1$ under navigation deformations}
\end{figure}

The restriction $\|W\|_{\ba}>1$ should be emphasized.

If $W$ is timelike with $\|W\|_{\ba}<1$, then one can obtain a Randers type pseudo norm as $F=\frac{\sqrt{(1-\bar b^2)\ba^2+\bb^2}}{1-\bar b^2}-\frac{\bb}{1-\bar b^2}$ by (\ref{diaoiningannng}) since $\a=\frac{\sqrt{(1-\bar b^2)\ba^2+\bb^2}}{1-\bar b^2}$ is a Lorentz norm with signature $(-,\cdots,-,+)$. Similarly, one can obtain another pseudo Randers norm beginning with a Lorentz norm $\ba$ and its spacelike vector $W$ with $-1<\|W\|^2_{\ba}<0$ or $\|W\|^2_{\ba}>-1$ combining with another equation
\begin{eqnarray}\label{donwieniganinag}
\left\langle -\f{y}{F(y)}-W,-\f{y}{F(y)}-W\right\rangle_{\ba}=-1,
\end{eqnarray}
We claim, without any argument, that such metrics are of simple curvature structure similar to Randers metrics with $b\neq1$ (see Theorem \ref{uabubdagnianengga}). The key reason is that all of them can be regarded as solutions of some particular navigation problem, hence navigation deformations works.

Recently, M.A. Javaloyes and M. S¨¢nchez studied a generalized navigation problem on Riemann space $(M,\ba)$ without the restriction of the wind field $W$ satisfying $\|W\|_{\ba}<1$\cite{JS}, including three cases: $\|W\|_{\ba}<1$, $\|W\|_{\ba}\equiv1$ and $\|W\|_{\ba}>1$. In particular, the case when $\|W\|_{\ba}\equiv1$ leads to the famous Kropina metric $F=\frac{\b^2}{\a}$. Obviously, their argument is different form us. However, it is worth to be remarked that when $\ba$ is a Lorentz metric and $W$ is a timelike (resp. spacelike) vector field with $\|W\|_{\ba}\equiv1$ (resp. $\|W\|^2_{\ba}\equiv-1$), then one can obtain Kropina-type metrics according to equation (\ref{diaoiningannng})
(resp. (\ref{donwieniganinag})). Similarly, we claim, without any argument, that both of them are of simple curvature structure similar to classical Kropina metrics\cite{zhang-shen}.

\section{A characterization for Randers-Einstein metrics}
In this section, we will characterize a Randers metric of constant Ricci curvature without the restriction of strong convexity. The whole discussion is similar to that in \cite{db-robl-onri}, but more concise. And, most importantly, one can see clearly what happens when $b\equiv1$.

First, it is known that the Ricci curvature of a Randers metric $F=\a+\b$ is given by
\begin{eqnarray*}
\RicooF&=&\Ricoo+\f{3}{4}(n-1)(\a+\b)^{-2}(\roo-2\a\so)^2+2(n-1)\a(\a+\b)^{-1}\qoo-2\too\\
&&-2(n-1)\a^2(\a+\b)^{-1}\to-\a^2\tIi+2\a\sIohi-\f{1}{2}(n-1)(\a+\b)^{-1}\rooho+(n-1)\a(\a+\b)^{-1}\soho,
\end{eqnarray*}
where $\Ricoo$ is the Ricci curvature of $\a$\cite{cxy-szm-fgaa}.

Assume $F$ is an Einstein metric with Ricci constant $K$, i.e.,
\begin{eqnarray*}
\RicooF=(n-1)K(\a+\b)^2.
\end{eqnarray*}
It can be rewrote as
\begin{eqnarray*}
\Rat+\a\Irrat=0,
\end{eqnarray*}
where
\begin{eqnarray*}
\Rat&=&4(\a^2+\b^2)\Ricoo-4(n-1)K(\a^4+6\a^2\b^2+\b^4)+3(n-1)\roo^2+12(n-1)\a^2\so^2+8(n-1)\a^2\qoo\\
&&-8(\a^2+\b^2)\too-8(n-1)\a^2\b\to-4\a^2(\a^2+\b^2)\tIi-2(n-1)\b\rooho+4(n-1)\a^2\soho+16\a^2\b\sIohi,\\
\Irrat&=&8\b\Ricoo-16(n-1)K(\a^2+\b^2)\b-12(n-1)\roo\so+8(n-1)\b\qoo-16\b\too-8(n-1)\a^2\to\\
&&-8\a^2\b\tIi-2(n-1)\rooho+4(n-1)\b\soho+8(\a^2+\b^2)\sIohi.
\end{eqnarray*}
Since $\a$ is irrational on $y$ but both of $\Rat$ and $\Irrat$ are rational, we have
\begin{eqnarray*}
\Rat=0,\qquad\Irrat=0.
\end{eqnarray*}

By
\begin{eqnarray*}
\Rat-\b\Irrat&=&4(\a^2-\b^2)\big\{\Ricoo-(n-1)K(\a^2+3\b^2)+3(n-1)\so^2+2(n-1)\qoo\\
&&-2\too-\a^2\tIi+(n-1)\soho+2\b\sIohi\big\}+3(n-1)(\roo+2\b\so)^2\\
&=&0,
\end{eqnarray*}
The polynomial $\a^2-\b^2$ must divide $\roo+2\b\so$ exactly. So exist a scalar function $c(x)$ such that
\begin{eqnarray}
\roo=c(x)(\a^2-\b^2)-2\b\so.\label{yawbbwjbabgg}
\end{eqnarray}
By (\ref{yawbbwjbabgg}) we can calculate the related terms such as $\rooho$, $\roho$, $\poo$ and $\qoo$, etc. As a result, $\Rat$ and $\Irrat$ become
\begin{eqnarray*}
\Rat&=&4(\a^2+\b^2)\Ricoo-4(n-1)K(\a^2+6\a^2\b^2+\b^4)+(n-1)\big\{(\a^2-\b^2)(3\a^2+\b^2)c^2\\
&&-16\a^2\b c\so-2(\a^2-\b^2)\b\co\big\}+4(n-1)(\a^2+\b^2)\so^2-8(\a^2+\b^2)\too-16(n-1)\a^2\b\to\\
&&-4\a^2(\a^2+\b^2)\tIi+4(n-1)(\a^2+\b^2)\soho+16\a^2\b\sIohi,\\
\Irrat&=&8\b\Ricoo+2(n-1)\big\{2(\a^2-\b^2)\b c^2-4(\a^2+\b^2)c\so-(\a^2-\b^2)\co\big\}-16(n-1)K(\a^2+\b^2)\b\\
&&+8(n-1)\b\so^2-16\b\too-8(n-1)(\a^2+\b^2)\to-8\a^2\b\tIi+8(n-1)\b\soho+8(\a^2+\b^2)\sIohi,
\end{eqnarray*}
wherh $\co:=c_{|i}y^i$ and $\cb:=c_{|i}b^i$.

By
\begin{eqnarray*}
2\b\Rat-(\a^2+\b^2)\Irrat&=&2(\a^2-\b^2)^2\left\{(n-1)\big[(4K+c^2)\b+\co+4c\so+4\to\big]-4\sIohi\right\}=0
\end{eqnarray*}
we have
\begin{eqnarray}
\sIohi=\f{1}{4}(n-1)\big\{(4K+c^2)\b+\co+4c\so+4\to\big\},\label{tevbagbenga}
\end{eqnarray}
So by $\Rat=0$ (it is already equivalent to $\Irrat=0$) we have
\begin{eqnarray*}
\Ricoo&=&\f{1}{4}\big\{[(n-1)(4K-3c^2)+4\tIi]\a^2+(n-1)(4K+c^2)\b^2\\
&&-2(n-1)\b\co-4(n-1)\so^2+8\too-4(n-1)\soho\big\}.
\end{eqnarray*}
As a result,
\begin{eqnarray*}
\Rico&=&\f{1}{4}\big\{[4(n-1)K(1+b^2)-(n-1)(3-b^2)c^2-(n-1)\cb+4\tIi+2(n-1)t]\b-(n-1)b^2\co\\
&&+2(n-1)c\so+2(n+3)\to-2(n-1)\sohb\big\},\\
\Ric&=&\f{1}{4}\big\{4(n-1)Kb^2(1+b^2)-(n-1)b^2(3-b^2)c^2-2(n-1)b^2\cb+4b^2\tIi+4[(n+1)+(n-1)b^2]t\big\},
\end{eqnarray*}
where $\Rico:=Ric_{ij}b^iy^j$ and $\Ric:=Ric_{ij}b^ib^j$ according to our rules in Section 2. Moreover, differentiating  (\ref{tevbagbenga}) with respect to $y^k$ and contracting with $b^k$ yields
\begin{eqnarray}
\sIhi=-\f{1}{4}\big\{(n-1)(4K+c^2)b^2+(n-1)\cb+4\tIi+4(n-1)t\big\}.\label{aidninagadgag}
\end{eqnarray}

Finally, we use the priori formulae in Section 2 to obtain more latent facts. Actually, there are only three formulae can be used here, namely (\ref{relation5}), (\ref{relation2}) and (\ref{relation6}). They read
\begin{eqnarray*}
&\displaystyle(1-b^2)\big\{\sohb+(\cb+t)\b-b^2\co-c\so+\to\big\}=0,&\\
&\displaystyle\f{1}{4}\big\{-2(n-3)[\sohb+(\cb+t)\b-b^2\co-c\so+\to]+3(n-1)(1-b^2)\co\big\}=0
\end{eqnarray*}
and
\begin{eqnarray*}
\frac{3}{4}(n-1)(1-b^2)\cb=0
\end{eqnarray*}
respectively. Hence, when $b\neq1$, the above equalities are equivalent to $\co=0$ and
\begin{eqnarray}
\sohb=c\so-\to-\b t.\label{dinaigneinagad}
\end{eqnarray}
But when $b\equiv1$, they indicate
\begin{eqnarray*}
(n-3)(\co-\b\cb+c\so-\to-\b t-\sohb)=0
\end{eqnarray*}
merely.

Summarizing the above discussions, we have the following conclusion.
\begin{theorem}\label{miandinginaindnga1}
Randers metric $F=\a+\b$ is an Einstein metric with Ricci constant $K$ if and only if
\begin{itemize}
\item for the case $b\neq1$ ,
\begin{eqnarray}
\Ricoo&=&\left\{(n-1)\left(K-\f{3}{4}c^2\right)+\tIi\right\}\a^2+(n-1)\left(K+\f{1}{4}c^2\right)\b^2\nonumber\\
&&-(n-1)\so^2+2\too-(n-1)\soho,\label{ebuabgabdgb1}\\
\roo&=&c(\a^2-\b^2)-2\b\so,\label{ububanajgnabdg1}
\end{eqnarray}
where $c$ is a constant;
\item for the case $b\equiv1$,
\begin{eqnarray}
\Ricoo&=&\left\{(n-1)\left(K-\f{3}{4}c^2\right)+\tIi\right\}\a^2+(n-1)\left(K+\f{1}{4}c^2\right)\b^2\nonumber\\
&&-\f{1}{2}(n-1)\b\co-(n-1)\so^2+2\too-(n-1)\soho,\label{ebuabgabdgb2}\\
\roo&=&c(\a^2-\b^2)-2\b\so,\label{ububanajgnabdg2}
\end{eqnarray}
where $c=c(x)$ is a scalar function satisfying an additional condition
\begin{eqnarray}
(n-3)(\co-\b\cb+c\so-\to-\b t-\sohb)=0.\label{dnaiegbabgadn}
\end{eqnarray}
\end{itemize}
\end{theorem}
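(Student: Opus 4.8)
The plan is to establish both directions together by following how the Einstein equation $\RicooF=(n-1)K(\a+\b)^2$ decomposes, exploiting that $\a=\sqrt{\aij y^iy^j}$ is the only irrational ingredient while every other quantity is polynomial in $y$. First I would insert the known Randers expression for $\RicooF$ into the Einstein equation, clear denominators, and arrange the result as $\Rat+\a\Irrat=0$ with $\Rat$ and $\Irrat$ both polynomial in $y$. A polynomial in $\a$ separates into an even part (rational in $y$ through $\a^2=\aij y^iy^j$) and an odd part carrying a single factor of $\a$, so the irrationality of $\a$ makes this one identity equivalent to the pair $\Rat=0$, $\Irrat=0$. Everything afterwards is algebraic manipulation of that pair.

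The engine of the argument is the formation of two well-chosen linear combinations. The combination $\Rat-\b\Irrat$ gathers a term $3(n-1)(\roo+2\b\so)^2$ against a multiple of $\a^2-\b^2$; since $\a^2-\b^2$ is an irreducible quadratic polynomial in $y$ dividing a perfect square, it must divide $\roo+2\b\so$, and a degree count yields a scalar $c(x)$ with $\roo=c(\a^2-\b^2)-2\b\so$, which is (\ref{ububanajgnabdg1})/(\ref{ububanajgnabdg2}). After substituting this back into $\Rat$ and $\Irrat$---which obliges me to recompute the dependent quantities $\rooho$, $\roho$, $\poo$ and $\qoo$ in terms of $c$, $\so$ and $\co$---a second combination $2\b\Rat-(\a^2+\b^2)\Irrat$ collapses to a multiple of $(\a^2-\b^2)^2$ and pins down $\sIohi$ as in (\ref{tevbagbenga}). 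Reinserting $\sIohi$ into $\Rat=0$ then produces the explicit formula for $\Ricoo$, and from it $\Rico$, $\Ric$, and (by differentiating the $\sIohi$ relation and contracting with $b^k$) $\sIhi$ all follow by contraction with $b^i$.

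The decisive step is to feed these outputs into the three applicable priori formulae of Section 3, namely (\ref{relation5}), (\ref{relation2}) and (\ref{relation6}): the first and third reduce to expressions carrying an overall factor $1-b^2$, and the second does too once the content of (\ref{relation5}) is substituted into it. This is exactly where the dichotomy of the theorem is born. When $b\neq1$ one cancels $1-b^2$, and the reduced identities force $\co=0$---so $c$ is a genuine constant---together with (\ref{dinaigneinagad}); the vanishing of $\co$ deletes the $\b\co$ term from the Ricci formula and delivers (\ref{ebuabgabdgb1}). When $b\equiv1$ the factor $1-b^2$ annihilates (\ref{relation5}) and (\ref{relation6}), leaving only the single combined constraint (\ref{dnaiegbabgadn}) from (\ref{relation2}); here $c$ stays a scalar function and the $\b\co$ term persists, giving (\ref{ebuabgabdgb2}). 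For the converse I would run the same algebra backwards, verifying that substitution of the listed relations into the Randers Ricci formula---together with the Section 3 identities, which hold for every $\a$ and $\b$---reproduces $\RicooF=(n-1)K(\a+\b)^2$.

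I expect the principal obstacle to be bookkeeping rather than conceptual: recomputing all the $c$-dependent contractions correctly after the substitution (\ref{ububanajgnabdg1}), and confirming that the two linear combinations genuinely factor through $\a^2-\b^2$ and $(\a^2-\b^2)^2$. The one genuinely delicate point is the case analysis itself---one must resist cancelling $1-b^2$ prematurely and instead carry it symbolically, since it is precisely the vanishing of this factor at $b\equiv1$ that accounts for both the weaker conclusion and the persistence of $\co$ in the curvature formula.
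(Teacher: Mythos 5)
Your proposal is correct and follows essentially the same route as the paper's own proof: the same splitting $\Rat+\a\,\Irrat=0$ into $\Rat=0$ and $\Irrat=0$, the same two combinations $\Rat-\b\,\Irrat$ and $2\b\,\Rat-(\a^2+\b^2)\Irrat$, and the same use of the priori formulae (\ref{relation5}), (\ref{relation2}) and (\ref{relation6}) to produce the dichotomy between $b\neq1$ and $b\equiv1$. The converse you sketch---rebuilding the auxiliary identities from the stated conditions together with the always-valid Section 3 formulae---is precisely the paper's sufficiency argument, including the key point that (\ref{dnaiegbabgadn}) cannot be so rebuilt and must therefore be retained as an extra hypothesis when $b\equiv1$.
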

\begin{proof}
When $b\neq1$, combining with the discussions above, it is clearly that when (\ref{ebuabgabdgb1}), (\ref{ububanajgnabdg1}) and (\ref{tevbagbenga})-(\ref{dinaigneinagad}) hold, then $F=\a+\b$ has constant Ricci curvature $K$. So these five equalities become the necessary and sufficient conditions for $F$ to be an Einstein metric automatically. However, the last three equalities can be rebuilt by using the first two equalities combining with the priori formulae (\ref{relation2}), (\ref{relation6}) and (\ref{relation5}). Hence, the necessary and sufficient conditions are reduced as (\ref{ebuabgabdgb1}) and (\ref{ububanajgnabdg1}).

Similarly, when $b\equiv1$, (\ref{ebuabgabdgb2}), (\ref{ububanajgnabdg2}), (\ref{dnaiegbabgadn}), (\ref{tevbagbenga}) and (\ref{aidninagadgag}) are the necessary and sufficient conditions for $F$ to be an Einstein metric. Among these five equalities, the last two can be rebuilt by using the first three equalities combining with the priori formulae (\ref{relation2}) and (\ref{relation6}). But be attention that the equality (\ref{dnaiegbabgadn}) isn't able to be rebuilt by any priori formula listed in Section 3. Hence, the necessary and sufficient conditions are reduced as (\ref{ebuabgabdgb2})-(\ref{dnaiegbabgadn}).
\end{proof}

\section{A characterization for Randers metrics of constant flag curvature}
First, it is known that the Riemann tensor of a Randers metric  $F=\a+\b$ is given by
\begin{eqnarray*}
\RIkF&=&\RIk+\f{1}{4}(\a+\b)^{-2}\big\{3(\roo-2\a\so)^2-8\a^2(\a+\b)\to+8\a(\a+\b)\qoo-2(\a+\b)\rooho\\
&&+4\a(\a+\b)\soho\big\}\dIk-\f{1}{4}\a^{-1}(\a+\b)^{-3}\big\{3(\roo-2\a\so)^2+4(\a+\b)(3\a+\b)\qoo\\
&&-4\a(\a^2-\b^2)\to-4(\a+\b)\b\soho-2(\a+\b)\rooho\big\}\yI\yk-\f{1}{4}(\a+\b)^{-3}\big\{3(\roo-2\a\so)^2\\
&&+8\a(\a+\b)\qoo-8\a^2(\a+\b)\to-2(\a+\b)\rooho+4\a(\a+\b)\soho\big\}\yI\bk-3(\a+\b)^{-1}\so\yI\sko\\
&&+\a(\a+\b)^{-1}\yI(2\qko-\qok)+\a^2(\a+\b)^{-1}\yI\tk+(\a+\b)^{-1}\yI(\roohk-\rkoho)\\
&&-\a(\a+\b)^{-1}\yI(2\sohk-\skho)+\tIo\yk-\a^{-1}\sIoho\yk+3\sIo\sko-\a^2\tIk+\a(2\sIohk-\sIkho).
\end{eqnarray*}
where $\RIk$ is the Riemann curvature of $\a$\cite{cxy-szm-fgaa}.

Assume $F$ has constant flag curvature $K$, i.e.,
\begin{eqnarray*}
\RIkF=K\left\{(\a+\b)^2\dIk-(\a+\b)\yI\left(\f{\yk}{\a}+\bk\right)\right\}.
\end{eqnarray*}
Recall that $y_k:=a_{kl}y^l$.

Plugging (\ref{yawbbwjbabgg}) (it holds since $F$ has constant Ricci curvatur) and all the related terms (such as $\rooho$, $\roho$, $\poo$ and $\qoo$, etc.) into the above equality, it can be rewrote as
\begin{eqnarray*}
(\a+\b)^3(\rat+\a\irrat)=0,
\end{eqnarray*}
where
\begin{eqnarray*}
\rat&=&-4\bigg\{2\left[\left(K+\f{1}{4}c^2\right)\b+\f{1}{4}\co+c\so+\to\right]\a^2\dIk-\left[\left(K+\f{1}{4}c^2\right)\b-\f{1}{2}\co+c\so+\to\right]\yI\yk\\
&&-\left(K+\f{1}{4}c^2\right)\a^2\yI\bk-\a^2\yI\ck-\a^2c\yI\sk-\a^2\yI\tk+\sIoho\yk+\a^2(\sIkho-2\sIohk)\bigg\},\\
\irrat&=&-4\bigg\{\left[\left(K-\f{3}{4}c^2\right)\a^2+(K+\f{1}{4}c^2)\b^2-\f{1}{2}\co\b-\so^2-\soho\right]\dIk-\left(K-\f{3}{4}c^2\right)\yI\yk\\
&&-\left[\left(K+\f{1}{4}c^2\right)\b+\f{1}{2}\co\right]\yI\bk+\b\yI\ck-3c\yI\sko+\so\yI\sk-\yI\skho+2\yI\sohk-\tIo\yk\\
&&-3\sIo\sko+\a^2\tIk-\RIk\bigg\}.
\end{eqnarray*}
Since $\a$ is irrational on $y$ but both of $\rat$ and $\irrat$ are rational, we have
\begin{eqnarray*}
\rat=0,\qquad\irrat=0.
\end{eqnarray*}

Solving $\irrat=0$ yields
\begin{eqnarray}
\RIk&=&\left\{\left(K-\f{3}{4}c^2\right)\a^2+\left(K+\f{1}{4}c^2\right)\b^2-\f{1}{2}\co\b-\so^2-\soho\right\}\dIk\nonumber\\
&&-\left(K-\f{3}{4}c^2\right)\yI\yk-\left\{\left(K+\f{1}{4}c^2\right)\b+\f{1}{2}\co\right\}\yI\bk+\b\yI\ck\nonumber\\
&&-3c\yI\sko+\so\yI\sk-\yI\skho+2\yI\sohk-\tIo\yk-3\sIo\sko+\a^2\tIk,\label{einqieabbbg}
\end{eqnarray}
hence
\begin{eqnarray*}
\RIb&=&-\bigg\{\left(K-\f{3}{4}c^2\right)\b+\left(K+\f{1}{4}c^2\right)b^2\b+\f{1}{2}b^2\co-\b\cb+2c\so-\to-\b t-2\sohb\bigg\}\yI\\
&&+\bigg\{\left(K-\f{3}{4}c^2\right)\a^2+\left(K+\f{1}{4}c^2\right)\b^2-\f{1}{2}\b\co-\so^2-\soho\bigg\}\bI-3\so\sIo-\b\tIo+\a^2\tI,\\
\Rbb&=&\bigg\{\left(K-\f{3}{4}c^2\right)b^2+t\bigg\}\a^2-\left(K-\f{3}{4}c^2-\cb-t\right)\b^2-b^2\b\co-(3+b^2)\so^2-2\b c\so-b^2\soho+2\b\sohb,
\end{eqnarray*}
where $\RIb:=\RIk b^k$ and $\Rbb:=\RIb b_i$.

The priori formulae (\ref{relation5}) and (\ref{relation4}) read
\begin{eqnarray}
(1-b^2)(b^2\co-\b\cb+c\so-\to-\b t-\sohb)=0\label{fatsvfvag1}
\end{eqnarray}
and
\begin{eqnarray}
-\left(K+\f{1}{4}c^2\right)(b^2\a^2-\b^2)+\b\co-\a^2\cb-\so^2-\too-\a^2t-\soho=0\label{fatsvfvag2}
\end{eqnarray}
respectively. By (\ref{fatsvfvag2}),
\begin{eqnarray*}
\soho=-\left(K+\f{1}{4}c^2\right)(b^2\a^2-\b^2)+\b\co-\a^2\cb-\so^2-\too-\a^2t.
\end{eqnarray*}
Moreover, by the above equality we have
\begin{eqnarray*}
\sohb=b^2\co-\b\cb+c\so-\to-\b t.
\end{eqnarray*}
Notice that such equality can't be obtained by (\ref{fatsvfvag1}) directly since $b$ is possible equal to $1$.

Now,
\begin{eqnarray*}
\rat b_ib^k=6(b^2\a^2-\b^2)\co=0.
\end{eqnarray*}
Hence, $c(x)$ must be a constant. In this case,
\begin{eqnarray*}
\rat b^k&=&4\bigg\{\left[\left(K+\f{1}{4}c^2\right)(b^2\a^2+\b^2)+ c\b \so+\b\to+\a^2t\right]\yI-2\a^2\left[\left(K+\f{1}{4}c^2\right)\b+c\so+\to\right]\bI\\
&&+c\a^2\sIo+\a^2(c\b+\so)\sI+\a^2\tIo+\a^2\b\tI-\b\sIoho+2\a^2\sIohb+\a^2\sIho\bigg\},\\
&=&0.
\end{eqnarray*}
By (\ref{einqieabbbg}) we can get the fourth-order Riemann curvature tensor $R_j{}^i{}_{kl}$. Combining with the priori formulea (\ref{relation9}), (\ref{relation14}) and (\ref{relation13}), the above equality reads
\begin{eqnarray*}
\rat b^k&=&-4\big\{(2b^4+2b^2-1)\a^2-(2b^2+1)\b^2\big\}\bigg\{\left(K+\f{1}{4}c^2\right)(b^2\yI-\b\bI)+t\yI+c\sIo+\so\sI+\tIo+\sIho\bigg\}=0.
\end{eqnarray*}
Hence,
\begin{eqnarray}
\sIhk=-\left(K+\f{1}{4}c^2\right)(b^2\dIk-\bI\bk)-t\dIk-c\sIk-\sI\sk-\tIk.\label{dinwuebuabgaidng}
\end{eqnarray}

Finally, $\rat$ is given by
\begin{eqnarray*}
\rat&=&-4\a^2\bigg\{2\left[\left(K+\f{1}{4}c^2\right)\b+c\so+t_0\right]\dIk-\left(K+\f{1}{4}c^2\right)(\yI\bk+\bI\yk)\\
&&-c(\yI\sk+\sI\yk)-(\yI\tk+\tI\yk)+\sIkho-2\sIohk\bigg\}.
\end{eqnarray*}
$\rat=0$ holds automatically after plugging the priori formulae (\ref{relation7}) and (\ref{relation8}) into it.

Summarizing the above discussions, we have the following conclusion.
\begin{theorem}\label{miandinginaindnga2}
Randers metric $F=\a+\b$ is of constant flag curvature $K$ if and only if exists a constant $c$ such that
\begin{eqnarray}
\RIk&=&\bigg\{\left(K-\f{3}{4}c^2\right)+\left(K+\f{1}{4}c^2\right)b^2+t\bigg\}(\a^2\dIk-\yI\yk)\nonumber\\
&&-3\sIo\sko-\yI\tko-\tIo\yk+\too\dIk+\a^2\tIk,\label{indiaaudgngad}\\
\rij&=&c(\aij-\bi\bj)-\bi\sj-\bj\si\label{ainiaudbuabgbag},
\end{eqnarray}
when $b\equiv1$, $\b$ should satisfy an additional condition
\begin{eqnarray}
s_{i|j}-s_{j|i}=-2cs_{ij}.\label{addsij}
\end{eqnarray}
\end{theorem}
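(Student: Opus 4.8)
The plan is to follow the same \emph{rationalization} strategy already used for the Ricci characterization (Theorem \ref{miandinginaindnga1}), but now applied to the full Riemann tensor rather than its trace. Starting from the hypothesis $\RIkF=K\{(\a+\b)^2\dIk-(\a+\b)\yI(\frac{y_k}{\a}+\bk)\}$, I would substitute the known formula for $\RIkF$ and clear denominators to reach the displayed split $(\a+\b)^3(\rat+\a\irrat)=0$. Since $\a$ is irrational in $y$ while $\rat$ and $\irrat$ are rational, the two pieces vanish separately. I would solve $\irrat=0$ first, because it directly expresses $\RIk$ (the Riemann curvature of the background $\a$) in terms of $\b$-data; this is the content of (\ref{einqieabbbg}). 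Contracting (\ref{einqieabbbg}) with $\bk$ and then $\bi$ produces $\RIb$ and $\Rbb$, which feed into the priori formulae.

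Next I would invoke the priori identities from Section 3 as consistency constraints. Formula (\ref{relation5}) gives (\ref{fatsvfvag1}) and formula (\ref{relation4}) gives (\ref{fatsvfvag2}); the latter lets me solve for $\soho$ and, after one more contraction, for $\sohb$. The delicate point here is that one must extract $\sohb=b^2\co-\b\cb+c\so-\to-\b t$ from (\ref{fatsvfvag2}) rather than from (\ref{fatsvfvag1}), precisely because the factor $(1-b^2)$ in (\ref{fatsvfvag1}) is allowed to vanish when $b\equiv1$. With these substitutions in hand, the contraction $\rat\, b_ib^k=6(b^2\a^2-\b^2)\co=0$ forces $\co=0$, hence $c(x)$ is a genuine constant. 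Then the further contraction $\rat\, b^k=0$, after replacing the fourth-order curvature terms via the priori formulae (\ref{relation9}), (\ref{relation14}), (\ref{relation13}), collapses to (\ref{dinwuebuabgaidng}), the expression for $\sIhk$.

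Once $c$ is constant and $\RIk$, $\soho$, $\sohb$, $\sIhk$ are all determined, I would verify that the \emph{remaining} equation $\rat=0$ is then an identity: substituting the priori formulae (\ref{relation7}) and (\ref{relation8}) into the surviving expression for $\rat$ should make it vanish automatically, confirming that no new condition is generated. Finally, I would repackage the results into the clean form demanded by the statement: rewrite (\ref{einqieabbbg}) as (\ref{indiaaudgngad}) using the relations among $\soho$, $\sohk$, $\skho$ and the $p,q,t$ tensors, and rewrite (\ref{yawbbwjbabgg}) in the tensorial form (\ref{ainiaudbuabgbag}) for $\rij$. The genuinely subtle part is the case $b\equiv1$: here (\ref{fatsvfvag1}) degenerates and cannot supply $\sohb$, so one must argue that the antisymmetrized derivative $s_{i|j}-s_{j|i}$ is no longer automatically pinned down, yielding the extra constraint (\ref{addsij}), namely $s_{i|j}-s_{j|i}=-2cs_{ij}$. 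I expect this bookkeeping of which identity survives in the singular case $b\equiv1$—and verifying that (\ref{addsij}) is exactly the leftover condition not implied by any Section 3 formula—to be the main obstacle, exactly as the analogous subtlety appeared in Theorem \ref{miandinginaindnga1}.
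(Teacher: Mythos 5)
Your proposal follows essentially the same route as the paper's proof: the same rationalization into $\rat+\a\,\irrat=0$, solving $\irrat=0$ for $\RIk$ as in (\ref{einqieabbbg}), invoking the priori formulae (\ref{relation5}) and (\ref{relation4}) (and, crucially, extracting $\sohb$ from the latter rather than the degenerate (\ref{fatsvfvag1})), the contractions $\rat\, b_ib^k$ and $\rat\, b^k$ to force $c$ constant and to obtain (\ref{dinwuebuabgaidng}), the automatic vanishing of $\rat$ via (\ref{relation7}) and (\ref{relation8}), and the same diagnosis of the singular case $b\equiv1$. The one piece you flag as an anticipated obstacle is resolved in the paper exactly as you'd hope: (\ref{addsij}) is the antisymmetric part of (\ref{dinwuebuabgaidng}) and, conversely, rebuilds it through the priori formula (\ref{relations10}), while for $b\neq1$ the formulae (\ref{relation5}), (\ref{relation13}) and (\ref{relation12}) rebuild (\ref{dinwuebuabgaidng}) from (\ref{indiaaudgngad}) and (\ref{ainiaudbuabgbag}) alone, which is what closes the sufficiency direction.
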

\begin{proof}
Necessity: Plugging (\ref{dinwuebuabgaidng}) and $c_k=0$ into (\ref{einqieabbbg}) yields (\ref{indiaaudgngad}). (\ref{ainiaudbuabgbag}) holds due to (\ref{yawbbwjbabgg}). (\ref{addsij}) can be obtained by (\ref{dinwuebuabgaidng}).

Sufficiency: Assume (\ref{indiaaudgngad}) and (\ref{ainiaudbuabgbag}) hold, in which $c$ is a constant. Firstly, the priori formula (\ref{relation5}) reads
\begin{eqnarray*}
(1-b^2)(\sohb-c\so+\to+\b t).
\end{eqnarray*}

Hence, when $b\neq1$, $\sohb=c\so-\to-\b t$. In this case, combining with the priori formula (\ref{relation13}), (\ref{relation12}) reads
\begin{eqnarray*}
(1-b^4)\left\{\left(K+\f{1}{4}c^2\right)(b^2\yI-\b\bI)+t\yI+c\sIo+\so\sI+\tIo+\sIho\right\}=0,
\end{eqnarray*}
which indicates (\ref{dinwuebuabgaidng}). Finally, by priori formulae (\ref{relation7}) and (\ref{relation8}) we know that $\rat=\irrat=0$, so $F=\a+\b$ has constant flag curvature $K$.

However, when $b\equiv1$, (\ref{dinwuebuabgaidng}) can't be rebuilt by (\ref{indiaaudgngad}), (\ref{ainiaudbuabgbag}) and the priori formulae (\ref{relation5})-(\ref{relation6}). That is to say, (\ref{indiaaudgngad}) and (\ref{ainiaudbuabgbag}) will not lead to $\rat=\irrat=0$, unless (\ref{dinwuebuabgaidng}) holds too. So (\ref{dinwuebuabgaidng}), (\ref{indiaaudgngad}) and  (\ref{ainiaudbuabgbag}) are sufficient to make $F=\a+\b$ be of constant flag curvature. Finally, by (\ref{addsij}) we have $\skhI=\sIhk+2c\sIk$ and $s_{k|b}=c\sk-\tk-t\bk$, so (\ref{dinwuebuabgaidng}) can be built by (\ref{addsij}) combining with the priori formula (\ref{relations10}). Hence, (\ref{dinwuebuabgaidng}) can be replaced with (\ref{addsij}).
\end{proof}

\begin{remark}
In the original version of Bao-Robles's characterization for strongly convex Randers metrics, there are three equations including the Curvature equation (\ref{indiaaudgngad}), the Basic equation (\ref{ainiaudbuabgbag}) and an extra equation termed CC(23) in \cite{db-robl-orso}. Later, they realized that the extra equation is derivable from the Basic and Curvature equations with the priori formula (\ref{relation1}). However, the additional condition (\ref{addsij}) for the case $b\equiv1$ is not derivable from the Basic and Curvature equations, as we have shown in the proof above.
\end{remark}

\section{Classification for Randers metrics with constant flag or Ricci curvature when $b\neq1$}
Obviously, for a Randers metric with constant flag or Ricci curvature, the geometrical properties of the original data $\a$ and $\b$ are daunting. Based on Zermelo's navigation problem, Bao-Robles-Shen found that the navigation deformations (\ref{donaienggainiga}) can make clear the underlying geometry of $\a$ and $\b$ for strongly convex Randers metrics. Their well-known result is concluded in the first case of Theorem \ref{uabubdagnianengga}. It should be attention that the notation `$\bbij$' always means the covariant derivative of $1$-form $\bb$ with respect to the corresponding Riemann metric $\ba$.

Actually, the case $b>1$ is similar to the strongly convex case. The computations in \cite{db-robl-onri,db-robl-orso,db-robl-szm-zerm} remain valid for this case. So we give our result directly and the proof is omitted.

\begin{theorem}[Classification]\label{uabubdagnianengga}
Let $F=\a+\b$ be a Randers metric on a $n$-dimensional manifold $M$ with $b\neq1$ anywhere. Then $F$ is of constant flag (resp. Ricci) curvature $K$ if and only if
\begin{enumerate}[(a)]
\item when $b<1$,
\begin{eqnarray}\label{Nd1}
\ba^2=(1-b^2)(\a^2-\b^2)
\end{eqnarray}
is a Riemann metric of constant sectional (resp. Ricci) curvature $\mu$,
\begin{eqnarray}\label{Nd2}
\bb=-(1-b^2)\b
\end{eqnarray}
is homothetic to $\ba$ with homothetic factor $-c$, namely $\bbij=-c\baij$. In this case,
\begin{eqnarray}\label{dnaienignadng}
F=\f{\sqrt{(1-\bar b^2)\ba^2+\bb^2}}{1-\bar b^2}-\f{\bb}{1-\bar b^2}.
\end{eqnarray}
\item when $b>1$, (\ref{Nd1}) is a Lorentz metric with signature $(-,\cdots,-,+)$ of constant sectional (resp. Ricci) curvature $\mu$, (\ref{Nd2}) is timelike and homothetic to $\ba$ with homothetic factor $-c$. In this case,
\begin{eqnarray}\label{diaiengnaingiang}
F=\f{\sqrt{(1-\bar b^2)\ba^2+\bb^2}}{\bar b^2-1}+\f{\bb}{\bar b^2-1}.
\end{eqnarray}
\end{enumerate}
For both cases, $\bar b=b$, $K=\mu-\frac{1}{4}c^2$.
\end{theorem}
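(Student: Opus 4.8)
The plan is to reduce the statement to the two characterizations already established, Theorem \ref{miandinginaindnga1} for the Ricci case and Theorem \ref{miandinginaindnga2} for the flag case, and then to transport the resulting conditions on $(\a,\b)$ across the navigation deformation (\ref{donaienggainiga}) onto $(\ba,\bb)$. Since $b\neq1$ everywhere, the factor $1-b^2$ never vanishes, so (\ref{donaienggainiga}) defines a genuine nondegenerate quadratic form $\ba$ --- positive definite when $b<1$ and of Lorentz signature $(-,\cdots,-,+)$ when $b>1$ --- together with the $1$-form $\bb$; moreover $\bar b=b$, and the inverse relations (\ref{dnaienignadng})/(\ref{diaiengnaingiang}) recover $F$ from $(\ba,\bb)$. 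The whole argument is purely algebraic in the curvature and covariant-derivative tensors, so one and the same computation handles both signatures; only the positivity of $\ba$ differs between the cases $b<1$ and $b>1$, and this does not enter the tensorial identities.

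First I would treat the Basic equation. For the flag case this is (\ref{ainiaudbuabgbag}), $\rij=c(\aij-\bi\bj)-\bi\sj-\bj\si$, and for the Ricci case the polynomial identity (\ref{ububanajgnabdg1}) is equivalent to the same tensor equation. The key lemma is that, under (\ref{donaienggainiga}), this relation is equivalent to $\bb$ being homothetic with factor $-c$, i.e. $\bbij=-c\baij$. I would establish it by writing the Levi-Civita connection of $\ba$ in terms of that of $\a$ --- the two differ by a tensor built from $\rij$, $\sij$ and $\b$ --- substituting into $\bbij$, and checking that the Basic equation collapses the result to $-c\baij$. This step is a direct, if lengthy, connection computation and I expect no conceptual difficulty.

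Next comes the Curvature equation, which is the heart of the matter. In the flag case one must show that (\ref{indiaaudgngad}), expressing $\RIk$ through the $\so,\to$ tensors, is equivalent to $\ba$ having constant sectional curvature $\mu=K+\f14c^2$ once the homothety $\bbij=-c\baij$ is in force; the Ricci case replaces ``constant sectional curvature'' by ``Einstein with Ricci constant $\mu$'' and follows by taking the trace. The mechanism is Shen's navigation formula relating the spray coefficients of $F$ and of $\ba$: when the wind $W=\bb^\sharp$ is homothetic, the two sprays differ only by terms that contribute a constant-curvature piece, and feeding this into Berwald's formula for $\RIkF$ converts the flag condition $\RIkF=K(\cdots)$ into the constant-curvature condition for $\ba$. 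I expect this curvature-transformation step to be the main obstacle, since it requires carefully tracking how the homothety constant $c$ and the flag constant $K$ combine into $\mu=K+\f14c^2$; this is precisely the computation carried out in \cite{db-robl-szm-zerm,db-robl-onri,db-robl-orso}, which remains formally valid here because nothing in it uses $b<1$ beyond the nondegeneracy $b\neq1$.

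Finally I would assemble the two directions. For necessity, assuming constant flag (resp. Ricci) curvature, Theorem \ref{miandinginaindnga2} (resp. \ref{miandinginaindnga1}) supplies the Basic and Curvature equations with a genuine constant $c$; the two lemmas then yield that $\ba$ has constant curvature $\mu=K+\f14c^2$ and that $\bb$ is homothetic with factor $-c$. For sufficiency I would run both lemmas in reverse: homothety of $\bb$ reproduces the Basic equation, constant curvature of $\ba$ reproduces the Curvature equation, and Theorem \ref{miandinginaindnga2} (resp. \ref{miandinginaindnga1}) then guarantees that $F$ has the asserted constant curvature. The sign of $1-b^2$ selects the Riemann or Lorentz alternative and fixes the explicit reconstruction formula (\ref{dnaienignadng}) or (\ref{diaiengnaingiang}), completing both cases simultaneously.
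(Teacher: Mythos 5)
Your proposal is correct and takes essentially the same route as the paper: the paper itself omits the proof of Theorem \ref{uabubdagnianengga}, stating only that the Bao--Robles--Shen computations (Basic equation $\leftrightarrow$ homothety of $\bb$, Curvature equation $\leftrightarrow$ constant curvature of $\ba$, with $\mu=K+\frac{1}{4}c^2$) remain valid for $b>1$ since they are signature-independent tensorial identities. Your plan --- invoking Theorems \ref{miandinginaindnga1} and \ref{miandinginaindnga2} and transporting the Basic and Curvature equations across the navigation deformation (\ref{donaienggainiga}), noting that only the nondegeneracy $b\neq1$ (not positivity) enters --- is precisely a reconstruction of that omitted argument.
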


All the Riemann space forms and their homothetic cotangent vector fields with homothetic factor $c$ (when the curvature $\mu\neq0$, $c$ must vanish) can be determined completely as
\begin{eqnarray}
\ba_\mu&=&\f{\sqrt{(1+\mu\xx)\yy-\mu\xy^2}}{1+\mu\xx},\label{uwubeubgadgad}
\end{eqnarray}
and
\begin{eqnarray*}
\bb&=&\left\{ \begin{aligned}
&c\xy+x^TQy+\langle a,y\rangle,\quad&\mu=0,\\
&x^TQy+\langle a,y\rangle,&\mu\neq0
\end{aligned} \right.\nonumber
\end{eqnarray*}
in some suitable local coordinate system, where $a$ is a constant victor and $Q$ is a constant antisymmetric matrix.

Hence, all the strongly convex Randers metrics can be completely determined locally. For instance, if $\ba=|y|$ is the standard Euclid metric and $\bb=\xy$, then the corresponding Randers metric (\ref{dnaienignadng})
\begin{eqnarray*}
F=\f{\sqrt{(1-\xx)\yy+\xy^2}}{1-\xx}-\f{\xy}{1-\xx}
\end{eqnarray*}
is the famous Funk's metric on open unit ball $\mathbb{B}^n(1)$, with constant flag curvature $K=-\frac{1}{4}$ and all the line segments as its geodesics. See \cite{db-robl-szm-zerm} for more discussions on strongly convex Randers metrics.

\section{Exact solutions of constant flag curvature with $b>1$}
Typical $4$-dimensional Lorentz metrics with constant sectional curvature include the Minkowski metric
\begin{eqnarray*}
\ud s^2=\ud t^2-\ud x^2-\ud y^2-\ud z^2
\end{eqnarray*}
with vanishing curvature $\mu=0$, the de Sitter metric
\begin{eqnarray*}
\ud s^2=\f{\ud t^2-\ud x^2-\ud y^2-\ud z^2}{\left\{1+\frac{1}{4}(t^2-x^2-y^2-z^2)\right\}^2}
\end{eqnarray*}
with positive curvature $\mu=1$, and the anti de Sitter metric
\begin{eqnarray*}
\ud s^2=\f{\ud t^2-\ud x^2-\ud y^2-\ud z^2}{\left\{1-\frac{1}{4}(t^2-x^2-y^2-z^2)\right\}^2}
\end{eqnarray*}
with nagetive curvature $\mu=-1$. The above metrics can also be expressed as
\begin{eqnarray*}
\ud s^2=(1-\mu r^2)\,\ud t^2-\f{\ud r^2}{1-\mu r^2}-r^2(\ud\theta^2+\sin^2\theta\,\ud\varphi^2).
\end{eqnarray*}

Actually, any $n$-dimensional Lorentz metric with constant sectional curvature $\mu$ must locally isometry to
\begin{eqnarray*}
\ba_\mu=\f{\sqrt{\langle y,y\rangle_L}}{1+\frac{\mu}{4}\langle x,x\rangle_{\mathrm L}},
\end{eqnarray*}
where
\begin{eqnarray*}
\xy_{\mathrm L}:=-x^1y^1\cdots-x^{n-1}y^{n-1}+x^ny^n
\end{eqnarray*}
is the standard Lorentz inner product. These metrics can also be expressed in projective coordinate systems as
\begin{eqnarray}\label{Lorentz-spaceform}
\ba_\mu=\f{\sqrt{(1+\mu|x|^2_{\mathrm L})|y|^2_{\mathrm L}-\mu\xy_{\mathrm L}^2}}{1+\mu|x|^2_{\mathrm L}},
\end{eqnarray}
just like the Riemann space forms (\ref{uwubeubgadgad}).

By the similar argument in Section 4 of \cite{db-robl-szm-zerm}, we can determine all the homothetic cotangent vector fields $\bb$ of (\ref{Lorentz-spaceform}) with homothetic factor $c$ completely:
\begin{eqnarray}\label{aoneingaingianng}
\bb=\left\{ \begin{aligned}
&c\xy_{\mathrm L}+x^TQy+\langle a,y\rangle_{\mathrm L},\quad&\mu=0,\\
&x^TQy+\langle a,y\rangle_{\mathrm L},&\mu\neq0,
\end{aligned} \right.
\end{eqnarray}
where $a$ is a constant victor and $Q$ is a constant antisymmetric matrix.

Hence, one can obtain many exact Randers metrics of constant flag curvature with $b>1$. Here we just write down a typical one: Let $\ba=\sqrt{\langle y,y\rangle_{\mathrm L}}$ be a flat Minkowski metric on $\RR^n$ and $\bb=\xy_L$ be its homothetic cotangent vector field with homethetic factor $c=1$. By Theorem \ref{uabubdagnianengga}, the following non-regular Randers metric
\begin{eqnarray*}
F=\f{\sqrt{(1-\xx_{\mathrm L})\yy_{\mathrm L}+\xy_{\mathrm L}^2}}{\xx_{\mathrm L}-1}+\f{\xy_{\mathrm L}}{\xx_{\mathrm L}-1}
\end{eqnarray*}
has constant flag curvature $K=-\frac{1}{4}$ on domain $|x|_{\mathrm L}>1$.

\section{Exact solutions of constant Ricci curvature with $b>1$}
In this section we will just focus on $4$-dimensional spaces. In $4$-dimension spacetimes, General Relativity provides many interesting models.

Einstein's field equations (EFE) are a highly nonlinear system of PDEs
\begin{eqnarray*}
R_{ij}-\f{1}{2}Rg_{ij}+\Lambda g_{ij}=\f{8\pi G}{c^4} T_{ij},
\end{eqnarray*}
where $g_{ij}$ denotes a Lorentz metric with signature $(-,-,-,+)$. $R_{ij}$ and $R$ are its Ricci curvature and scalar curvature respectively, and $T_{ij}$ is the so-called stress-energy tensor. When $T_{ij}=0$, the solutions of EFE are called vacuum solutions in physics. Such solutions are called Einstein-Lorentz metrics in mathematics since they have constant Ricci curvature. Almost all the vacuum solutions of EFE in this section can be found in the monograph \cite{MacCallum}.

\begin{example}\label{dinianeingainding}
The first non-trivial vacuum solution of EFE is the well-known Schwarzschild metric
\begin{eqnarray}\label{Schwarzschild1}
\ud s^2=\left(1-\f{2m}{R}\right)\ud t^2-\left(1-\f{2m}{R}\right)^{-1}\ud R^2-R^2(\ud\theta^2+\sin^2\theta\,\ud\varphi^2),
\end{eqnarray}
where $m$ is a constant, $R>2m$. (\ref{Schwarzschild1}) reads in isotropic coordinates by transformation $R=r\left(1+\frac{m}{2r}\right)^2$ as
\begin{eqnarray}\label{Schwarzschild2}
\ud s^2=\f{\left(1-\frac{m}{2r}\right)^2}{\left(1+\frac{m}{2r}\right)^2}\,\ud t^2-\left(1+\frac{m}{2r}\right)^4(\ud x^2+\ud y^2+\ud z^2),
\end{eqnarray}
in which $r:=\sqrt{x^2+y^2+z^2}$. Schwarzschild metric is Ricci-flat, admitting a manifest timelike Killing vector $\partial_t$ for both expressions (\ref{Schwarzschild1}) and (\ref{Schwarzschild2}) since they are invariable under flow $t\rightarrow t+\textrm{constant}$.

Taking $\ba=\sqrt{\ud s^2}$ using (\ref{Schwarzschild2}), then the dual $1$-form of $X=\lambda\partial_t$ ($\lambda$ is a non-zero constant) is $\bb=\lambda\f{\left(1-\frac{m}{2r}\right)^2}{\left(1+\frac{m}{2r}\right)^2}\,\ud t$. $\bar b^2=\lambda^2\f{\left(1-\frac{m}{2r}\right)^2}{\left(1+\frac{m}{2r}\right)^2}$. By (\ref{diaiengnaingiang}), the following two-parameter non-regular Randers metrics
\begin{eqnarray*}
F_{m,\lambda}=\f{\sqrt{\left(1+\frac{m}{2r}\right)^2\left(1-\frac{m}{2r}\right)^2\ud t^2+\left[{\lambda^2\left(1-\frac{m}{2r}\right)^2-\left(1+\frac{m}{2r}\right)^2}\right]\left(1+\frac{m}{2r}\right)^6(\ud x^2+\ud y^2+\ud z^2)}+\lambda\left(1-\frac{m}{2r}\right)^2\ud t}{\lambda^2\left(1-\frac{m}{2r}\right)^2-\left(1+\frac{m}{2r}\right)^2}
\end{eqnarray*}
are Ricci-flat on domain $D_{m,\lambda}$ determined by $\lambda^2\frac{\left(1-\frac{m}{2r}\right)^2}{\left(1+\frac{m}{2r}\right)^2}>1$.

Be attention that $m$ is allowed to be negative,  although it should be positive In General Relativity since it represents the quality of a spherical mass((\ref{Schwarzschild2}) is indeed an Einstein-Lorentz metric for any $m\in\RR$). For instance, when $\lambda=1$, domain $D_{m,0}$ is non-empty only if $m<0$. 

Finally, it is worth to be mentioned that, by applying symmetry transformation $T_\gamma$ introduced in \cite{akbar} on Schwarzschild metric (\ref{Schwarzschild1}), M. Akbar and M. MacCallum obtain one-parameter extended families of Ricci flat metrics as follows,
\begin{eqnarray*}
\ud s^2&=&-r^{2\gamma}\sin^{2\gamma}\theta\left(1-\f{2m}{r}\right)^{\gamma+1}\ud t^2+r^{2\gamma^2-2\gamma}\sin^{2\gamma^2-2\gamma}\theta\left(1-\f{2m}{r}\right)^{\gamma^2+\gamma-1}\ud r^2\nonumber\\
&&+r^{2\gamma^2-2\gamma+2}\sin^{2\gamma^2-2\gamma}\theta\left(1-\f{2m}{r}\right)^{\gamma^2+\gamma}\ud\theta^2
+r^{2-2\gamma}\sin^{2-2\gamma}\theta\left(1-\f{2m}{r}\right)^{-\gamma}\ud\varphi^2.
\end{eqnarray*}
Hence, one can construct more Ricci-flat non-singular Randers metrics by using such generalized  Schwarzschild metrics and their manifest timelike Killing vector $\partial_t$.
\end{example}

\begin{example}
Kerr metric
\begin{eqnarray}\label{Kerr1}
\ud s^2&=&\left(1-\f{2mr}{r^2+a^2\cos^2\theta}\right)(\ud u+a\sin^2\theta\,\ud\varphi)^2-2(\ud u+a\sin^2\theta\,\ud\varphi)(\ud r+a\sin^2\theta\,\ud\varphi)\nonumber\\
&&-(r^2+a^2\cos^2\theta)(\ud\theta^2+\sin^2\theta\,\ud\varphi^2)
\end{eqnarray}
is Ricci-flat, where $m$ and $a$ are constants. $\partial_u$ is a manifest timelike Killing vector when $\f{2mr}{r^2+a^2\cos^2\theta}<1$.

In Cartesian Kerr-Schild coordinates, Kerr metric reads
\begin{eqnarray}\label{Kerr2}
\ud s^2=\ud t^2-\ud x^2-\ud y^2-\ud z^2-\f{2mr^3}{r^4+a^2z^2}\left\{\ud t+\f{r(x\,\ud x+y\,\ud y)}{r^2+a^2}+\f{a(y\,\ud x-x\,\ud y)}{r^2+a^2}+\f{z\,\ud z}{r}\right\}^2,
\end{eqnarray}
where $r$ is determined implicitly by $r^4-(x^2+y^2+z^2-a^2)r^2-a^2z^2=0$. $\partial_t$ is a manifest timelike Killing vector of (\ref{Kerr2}) when $\f{2mr^3}{r^4+a^2z^2}<1$.

Taking $\ba=\sqrt{\ud s^2}$ using (\ref{Kerr2}), then the dual $1$-form of $X=\lambda\partial_t$ is
\begin{eqnarray*}
\bb=\lambda\,\ud t-\f{2\lambda mr^3}{r^4+a^2z^2}\left\{\ud t+\f{r(x\,\ud x+y\,\ud y)}{r^2+a^2}+\f{a(y\,\ud x-x\,\ud y)}{r^2+a^2}+\f{z\,\ud z}{r}\right\}.
\end{eqnarray*}
Obviously, $\bar b^2=\|\partial_t\|_{\ba}^2=\lambda^2\left\{1-\frac{2mr^3}{r^4+a^2z^2}\right\}$. By (\ref{diaiengnaingiang}), the three-parameter non-regular Randers metrics $F_{m,a,\lambda}$ are Ricc-flat on domain $D_{m,a,\lambda}$ determined by $\lambda^2\left\{1-\frac{2mr^3}{r^4+a^2z^2}\right\}>1$. Similar to Example \ref{dinianeingainding}, $m$ is allowed to be negative here.
\end{example}

\begin{example}
C-metric
\begin{eqnarray}\label{C-metric}
\ud s^2=\f{1}{(x+y)^2}\left\{F\,\ud t^2-\f{1}{G}\,\ud x^2-\f{1}{F}\,\ud y^2-G\,\ud z^2\right\}
\end{eqnarray}
is Ricci-flat, where $m$ and $a$ are constants, and $F:=-1+y^2-2may^3>0$, $G:=1-x^2-2max^3>0$. $\partial_t$ is a manifest timelike Killing vector.

Taking $\ba=\sqrt{\ud s^2}$ using (\ref{C-metric}). then the dual $1$-form of $X=\lambda\partial_t$ is $\bb=\frac{\lambda F}{(x+y)^2}\,\ud t$. $\bar b^2=\frac{\lambda^2F}{(x+y)^2}$. By (\ref{diaiengnaingiang}), the following three-parameter non-regular Randers metrics
\begin{eqnarray*}
F_{m,a,\lambda}=\f{\sqrt{(x+y)^2F\,\ud t^2+\left[\lambda^2F-(x+y)^2\right]\left(\frac{1}{G}\,\ud x^2+\frac{1}{F}\,\ud y^2+G\,\ud z^2\right)}+\lambda F\,\ud t}{\lambda^2F-(x+y)^2}
\end{eqnarray*}
are Ricci-flat on domain $D_{m,a,\lambda}$ determined by $\frac{\lambda^2F}{(x+y)^2}>1$.

By applying $T_\gamma$ transformation on C-metric,  M. Akbar and M. MacCallum obtain generalized C-metrics as below,
\begin{eqnarray*}
\ud s^2=\f{F^{\gamma+1}G^\gamma}{(x+y)^{4\gamma+2}}\,\ud t^2-\f{F^{\gamma^2+\gamma}G^{\gamma^2-\gamma-1}}{(x+y)^{4\gamma^2+2}}\,\ud x^2-\f{F^{\gamma^2+\gamma-1}G^{\gamma^2-\gamma}}{(x+y)^{4\gamma^2+2}}\,\ud y^2-\f{F^{-\gamma}G^{-\gamma+1}}{(x+y)^{-4\gamma+2}}\,\ud z^2.
\end{eqnarray*}
They are also Ricci-flat. Hence, one can construct more Ricci-flat non-regular Randers metrics by using such metrics and their manifest timelike Killing vector $\partial_t$.
\end{example}

\begin{example}
Kasner metric
\begin{eqnarray}\label{Kasner}
\ud s^2=\ud t^2-t^{2p}\,\ud x^2-t^{2q}\,\ud y^2-t^{2r}\,\ud z^2,
\end{eqnarray}
with the Kasner exponents $p$, $q$ and $r$ satisfing
\begin{eqnarray*}
p+q+r=1,\qquad p^2+q^2+r^2=1
\end{eqnarray*}
is Ricci-flat, and admitting a homothetic vector
\begin{eqnarray*}
X_1=t\partial_t+(1-p)x\partial_x+(1-q)y\partial_y+(1-r)z\partial_z
\end{eqnarray*}
with homothety factor $1$.

Taking $\ba=\sqrt{\ud s^2}$ using (\ref{Kasner}), then the dual $1$-form of $X_\lambda=\lambda X_1$ is
\begin{eqnarray*}
\bb=\lambda\left\{t\,\ud t-(1-p)xt^{2p}\,\ud x-(1-q)yt^{2q}\,\ud y-(1-r)zt^{2t}\,\ud z\right\}
\end{eqnarray*}
and
\begin{eqnarray*}
\bar b^2=\|X_\lambda\|_{\ba}^2=\lambda^2\left\{t^2-(1-p)^2x^2t^{2p}-(1-q)^2y^2t^{2q}-(1-r)^2z^2t^{2r}\right\}.
\end{eqnarray*}
By (\ref{diaiengnaingiang}), the non-regular Randers metric $F_{p,q,r,\lambda}$ has constant Ricci curvature $-\frac{\lambda^2}{4}$ on domain $D_{p,q,r,\lambda}$ determined by $\bar b^2>1$.
\end{example}

\begin{example}
Levi-Civita metric
\begin{eqnarray}\label{Levi-Civita}
\ud s^2=z^{2\gamma}\,\ud t^2-z^{-2(\gamma-1)}\,\ud x^2-z^{2\gamma(\gamma-1)}(\ud y^2+\ud z^2)
\end{eqnarray}
($\gamma$ is a constant) is Ricci-flat, admitting a manifest timelike Killing vector $\partial_t$. Notice that such metric is flat when $\gamma=0$ or $1$.

Taking $\ba=\sqrt{\ud s^2}$ using (\ref{Levi-Civita}), then the dual $1$-form of $X=\lambda\partial_t$ is
$\bb=\lambda z^{2\gamma}\,\ud t$. $\bar b^2=\lambda^2z^{2\gamma}$. By (\ref{diaiengnaingiang}), the following two-parameter non-regular Randers metrics
\begin{eqnarray*}
F^0_{\gamma,\lambda}=\frac{\sqrt{z^{2\gamma}\,\ud t^2+(\lambda^2z^{2\gamma}-1)z^{-2(\gamma-1)}\,\ud x^2+(\lambda^2z^{2\gamma}-1)z^{2\gamma(\gamma-1)}(\ud y^2+\ud z^2)}+\lambda z^{2\gamma}\,\ud t}{\lambda^2z^{2\gamma}-1}
\end{eqnarray*}
are Ricci-flat on domain $D^0_{\gamma,\lambda}$ determined by $\lambda^2z^{2\gamma}>1$.

Moreover, (\ref{Levi-Civita}) admits a homothetic vector
\begin{eqnarray*}
X_1=\f{1}{\gamma^2-\gamma+1}\left\{(\gamma-1)^2t\partial_t+\gamma^2x\partial_x+y\partial y+z\partial_z\right\},
\end{eqnarray*}
with homothety factor $1$. Obviously, the dual $1$-form of $X_\lambda=\lambda X_1$ is
\begin{eqnarray*}
\bb=\f{\lambda}{\gamma^2-\gamma+1}\left\{(\gamma-1)^2z^{2\gamma}t\,\ud t-\gamma^2z^{-2(\gamma-1)}x\,\ud x-z^{2\gamma(\gamma-1)}(y\,\ud y+z\,\ud z)\right\}
\end{eqnarray*}
and
\begin{eqnarray*}
\bar b^2=\|X_\lambda\|_{\ba}^2=\f{\lambda^2}{(\gamma^2-\gamma+1)^2}\left\{(\gamma-1)^4z^{2\gamma}t^2
-\gamma^4z^{-2(\gamma-1)}x^2-z^{2\gamma(\gamma-1)}(y^2+z^2)\right\}.
\end{eqnarray*}
Hence, $\bb$ is possiblely timelike as long as $\gamma\neq1$. By (\ref{diaiengnaingiang}), one can obtain two-parameter non-regular Randers metrics $F^-_{\gamma,\lambda}$ having constant Ricci curvature $-\frac{\lambda^2}{4}$ on domain $D^-_{\gamma,\lambda}$ determined by $\bar b^2>1$.
\end{example}

\begin{example}
It seems that the known non Ricci-flat exact Einstein-Lorentz metrics are rare. We only fine one in literatures. Cartor-Novotn\'y-Horsk\'y metric
\begin{eqnarray}\label{Cartor1}
\ud s^2=\cos^2\left(\frac{3\sqrt{\Lambda}}{2}z\right)\sin^{-\frac{2}{3}}\left(\frac{3\sqrt{\Lambda}}{2}z\right)\,\ud t^2-\sin^{\frac{4}{3}}\left(\frac{3\sqrt{\Lambda}}{2}z\right)(\ud x^2+\ud y^2)-\ud z^2
\end{eqnarray}
is an Einstein metric with negative Ricci curvature $-\Lambda$. Similarly,
\begin{eqnarray}\label{Cartor2}
\ud s^2=\cosh^2\left(\frac{3\sqrt{\Lambda}}{2}z\right)\sinh^{-\frac{2}{3}}\left(\frac{3\sqrt{\Lambda}}{2}z\right)\,\ud t^2-\sinh^{\frac{4}{3}}\left(\frac{3\sqrt{\Lambda}}{2}z\right)(\ud x^2+\ud y^2)-\ud z^2
\end{eqnarray}
has positive Ricci curvature $\Lambda$. Both of them admit a manifest timelike Killing vector $\partial_t$.

Taking $\ba=\sqrt{\ud s^2}$ using (\ref{Cartor1}) with $\Lambda=\frac{4}{9}$, then the dual $1$-form of $X=\lambda\partial_t$ is $\bb=\lambda\cos^2z\sin^{-\frac{2}{3}}z\,\ud t$. $\bar b^2=\lambda^2\cos^2z\sin^{-\frac{2}{3}}z$. By (\ref{diaiengnaingiang}), the following non-regular Randers metric
\begin{eqnarray*}
F=\f{{\sqrt{\cos^2z\sin^{\frac{2}{3}}z\,\ud t^2+\left(\lambda^2\cos^2z-\sin^{\frac{2}{3}}z\right)
\sin^{\frac{2}{3}}z\left[\sin^{\frac{4}{3}}z(\ud x^2+\ud y^2)+
\ud z^2\right]}+\lambda\cos^2z\,\ud t}}{\lambda^2\cos^2z-\sin^{\frac{2}{3}}z}
\end{eqnarray*}
has constant Ricci curvature $-\frac{4}{9}$ on domain determined by $\lambda^2\cos^2z\sin^{-\frac{2}{3}}z>1$, which is a strip domain
\begin{eqnarray*}
0<z<\arcsin\sqrt{1-\frac{1}{6}\lambda^{-3}\left(108\lambda^3+12\sqrt{81\lambda^6+12}\right)^\frac{1}{3}
+2\lambda^{-3}\left(108\lambda^3+12\sqrt{81\lambda^6+12}\right)^{-\frac{1}{3}}}.
\end{eqnarray*}
One can obtain some non-regular Einstein-Randers metrics of positive Ricci curvature by using (\ref{Cartor2}).
\end{example}

There are more exact vacuum solutions of EFE, such as
\begin{itemize}
\item Taub-NUT metric:
\begin{eqnarray*}\label{Taub-NUT}
\ud s^2=-f(r)(\ud t+2l\cos\theta\,\ud\varphi)^2+f^{-1}(r)\,\ud r^2+(r^2+l^2)(\ud\theta^2+\sin^2\theta\,\ud\varphi^2),
\end{eqnarray*}
where $f(r)=\frac{r^2-2mr-l^2}{r^2+l^2}$~($m$, $l$ are constants).
\item
Kerns-Wild metric:
\begin{eqnarray*}
\ud s^2&=&-\left(1-\f{2m}{r}\right)e^{2c(m-r)\cos\theta}\,\ud t^2+e^{2c(r-3m)\cos\theta-c^2(r^2-2mr)\sin^2\theta}\left\{\left(1-\f{2m}{r}\right)^{-1}\ud r^2+r^2\,\ud\theta^2\right\}\nonumber\\
&&+r^2\sin^2\theta e^{-2c(m-r)\cos\theta}\,\ud\varphi^2.
\end{eqnarray*}
\item
Oszv\'{a}th-Sch\"{u}cking's "anti-Mach" metric:
\begin{eqnarray*}
\ud s^2=\ud x^2+\ud y^2-2\,\ud u\ud v-2\left\{(x^2-y^2)\cos(2u)-2xy\sin(2u)\right\}\,\ud u^2.
\end{eqnarray*}
\end{itemize}
All of them are Ricci-flat and admit non-trivial timelike homothetic vector fields. One can obtain more non-regular Einstein-Randers metrics by a similar process.

\section{Singular Randers metrics}
We will call a Randers metric with $b\equiv1$ a \emph{singular Randers metrics}. This is a very special case.

According to Theorem \ref{uabubdagnianengga}, when $b\neq1$, the structure of Randers metric with constant flag curvature is similar to that with constant Ricci curvature. However, it seems that there is a gap between the singular Randers metrics with constant flag curvature and those with Ricci curvature.  Specifically speaking, if a singular Randers metric $F$ is of constant flag curvature, then (\ref{yawbbwjbabgg}) holds with the factor $c$ being a constant, but if $F$ is of constant Ricci curvature, $c$ is \emph{not necessary} a constant.

At present, we can't provide a more concise description for singular Randers metrics even with constant flag curvature. However, the second author have found some exact examples in 2015 with another coauthor.

Due to Example 9.6 in \cite{yct-zhm-pfga}, we shown that if $\ba$ is a Riemann metric with constant sectional curvature $\bar\mu$, and $\bb$ is closed and conformal with respect to $\ba$, with the conformal factor $c(x)$ satisfying $c^2=-\bar\mu\bar b^2$, then the general $\ab$-metric
\begin{eqnarray}\label{ainianingnangg}
F=\f{\sqrt{(1-\bar b^2)\ba^2+\bb^2}}{1-\bar b^2}-\f{\bb}{\bar b(1-\bar b^2)}.
\end{eqnarray}
has constant flag curvature $K=-\frac{1}{4}$.

Exist data $(\ba,\bb)$ satisfying the required conditions. Taking
\begin{eqnarray}\label{ab}
\ba=\f{\sqrt{(1+\bar\mu|x|^2)|y|^2-\bar\mu\langle x,y\rangle^2}}{1+\bar\mu|x|^2},\qquad
\bb=\f{\lambda\langle x,y\rangle+(1+\bar\mu|x|^2)\langle a,y\rangle-\bar\mu\langle a,x\rangle\langle x,y\rangle}{(1+\bar\mu|x|^2)^\frac{3}{2}},
\end{eqnarray}
then $\ba$ has constant curvature $\bar\mu$, and $\bb$ is closed and conformal with respect to $\ba$, with the conformal factor $c(x)$ satisfying
\begin{eqnarray*}
c^2=\lambda^2+\bar\mu|a|^2-\bar\mu\bar b^2.
\end{eqnarray*}
So, if the constant number $\lambda$ and the constant vector $a$ satisfy $\lambda^2+\bar\mu|a|^2=0$, then the corresponding data $(\ba,\bb)$ is what we need. It is interesting that it occurs only for hyperbolic metrics.

Take $\a=\frac{\sqrt{(1-\bar b^2)\ba^2+\bb^2}}{1-\bar b^2}$ and $\b=-\frac{\bb}{\bar b(1-\bar b^2)}$. One can verify that $\|\b\|_{\a}\equiv1$. That is to say, (\ref{ainianingnangg}) is a singular Randers metric in fact. The above exact metrics are the first examples of singular Randers metrics with constant flag curvature.

Singular Randers metrics are not the solution of any kind of navigation problem on Riemann or Lorentz spaces, since you can't obtain a elliptical or hyperbolic hypersurface forever by shifting a parabolic hypersurface. This is a monstrous disaster. In other words, just because $b\equiv1$, the crucial navigation deformations (\ref {donaienggainiga}) for strongly convex Randers metrics are invalid for singular Randers metrics.

It is amazing that the expression (\ref{ainianingnangg}) is very similar to the navigation expression (\ref{dnaienignadng}) for strongly convex Randers metrics. At the very beginning, we thank maybe such expression will become the key to reveal the curvature structure of singular Einstein-Randers metrics. We guessed that a singular Randers metric (\ref{ainianingnangg}) is an Einstein metric if and only if $\ba$ is an Einstein metric and $\b$ is conformal with respect to $\bb$ (with some possible additional conditions), just like the strongly convex Randers metrics. Unfortunately, it is not true.

Gradually, we realized that the structure of singular Einstein-Randers metrics is extremely complicated. In fact, the second author plan to write a series of papers to demonstrate this theme, and we believe the key method is the so-called $\b$-deformations, which generalize the navigation deformations for strongly convex Randers metrics in a natural way\cite{yct-dhfp}.

\noindent Xiaoyun Tang\\
School of Mathematical Sciences, South China Normal
University, Guangzhou, 510631, P.R. China\\
tangxy15@fudan.edu.cn
\newline
\newline
\newline
\noindent Changtao Yu\\
School of Mathematical Sciences, South China Normal
University, Guangzhou, 510631, P.R. China\\
aizhenli@gmail.com

\begin{thebibliography}{00}
\bibitem{akbar}
M. Akbar and M. MacCallum, {\it Static asisymetric Einstein equations in vacuum: Symmetry, new solutions, and Ricci solitions}, Phys. Rev. D, {\bf 92} (2015), 063017.

\bibitem{db-rsf}
D. Bao, {\it Randers space forms}, Period. Math. Hung., {\bf 48} (2004), 3-15.

\bibitem{db-robl-orso}
D. Bao and C. Robles, {\it On Randers spaces of constant flag curvature}, Rep. on Math. Phys., {\bf 51} (2003), 9-42.

\bibitem{db-robl-onri}
D. Bao and C. Robles, {\it Ricci and flag curvature in Finsler geometry}, in "{\em A Sampler of Finsler Geometry}" MSRI series, Cambridge University Press, 2004.

\bibitem{db-robl-szm-zerm}
D. Bao, C. Robles and Z. Shen, {\it Zermelo navigation on Riemannian manifolds}, J. Diff. Geom., {\bf 66} (2004), 391-449.

\bibitem{cxy-szm-fgaa}
X. Cheng and Z. Shen, {\it Finsler Geometry-An approach via Randers spaces}. Science Press, Mathematics Monograph Series {\bf 23}, 2012.

\bibitem{JS}
M.A. Javaloyes and M. S¨¢nchez, {\it Wind Riemannian spaceforms and Randers metrics of constant flag curvature}, arXiv:1701.01273.

\bibitem{huang}
X. Mo and L. Huang, {\it On curvature decreasing property of a class of navigation problems}, Publ. Math. Debrecen, {\bf 71} (2007), 141-163.

\bibitem{Ran}
G. Randers, {\it On an asymmetric in the four-space of general relativily}, Phys. Rev., {\bf 59} (1941), 195-199.

\bibitem{Robles}
C. Robles, {\it geoesics in Randers spaces of constant curvature}, Trans. Amer. Math. Soc., {\bf 359} (2007), 1633-1651.

\bibitem{Sabau-Shi}
S.V. Sabau, K. Shibuya and R. Yoshikawa, {\it Geodesics on strong Kropina manifolds}, preprint, 2017.

\bibitem{shen-fmwk}
Z. Shen, {\it Finsler metrics with $K=0$ and $S=0$}, Canad. J. Math., {\bf 55} (2003), 112-132.

\bibitem{MacCallum}
H. Stephani, D. Kramer, M. MacCallum, C. Hoenselaers and E. Herlt, {\it Exact solutions of Einstein's
field equations}, 2nd edition, Cambridge University Press, Cambridge, 2003.

\bibitem{yct-dhfp}
C. Yu, {\it Deformations and Hilbert's Fourth Problem}, Math. Ann., {\bf365} (2016), 1379-1408.

\bibitem{yct-zhm-onan}
C. Yu and H. Zhu, {\it On a new class of Finsler metrics}, Diff. Geom. Appl., {\bf 29} (2011), 244-254.

\bibitem{yct-zhm-pfga}
C. Yu and H. Zhu, {\it Projectively flat general $\ab$-metrics with constant flag curvature}, J. Math. Anal. Appl., {\bf412} (2015), 664-675.

\bibitem{zhang-shen}
X. Zhang and Y. Shen, {\it On Einstein-Kropina metrics}, Diff. Geom. Appl., {\bf 31} (2013), 80-92.

\bibitem{zermelo}
E. Zermelo, {\it \"{U}ber das Navigationsproblem bei ruhender oder ver\"anderlicher Windverteilung}, Z. Angew. Math. Mech., {\bf 11} (1931), 114-124.
\end{thebibliography}
\end{document}